\numberwithin{equation}{section} 
\numberwithin{figure}{section} 
  \theoremstyle{plain}
  \newtheorem{thm}{Theorem}[section]
  \theoremstyle{plain}
  \newtheorem{cor}[thm]{Corollary}
  \theoremstyle{plain}
  \newtheorem{prop}[thm]{Proposition}
  \theoremstyle{remark}
  \newtheorem{rem}[thm]{Remark}
  \theoremstyle{remark}
 \newtheorem{mydef}{Definition}
  \theoremstyle{plain}
  \theoremstyle{definition}
\newtheorem{exmp}{Example}[section]
\newcommand{\R}{\mathbb{R}}
\begin{document}

\title[A family of Solutions of the n Body problem]{A family of Solutions of the n Body problem}
\author{ Oscar Perdomo}
\date{\today}

\curraddr{Department of Mathematics\\
Central Connecticut State University\\
New Britain, CT 06050\\}

\email{ perdomoosm@ccsu.edu}

\begin{abstract}

In this paper we characterize all the solutions of the three body problem on which one body with mass $m_1$ remains in a fixed line and the other two bodies have the same mass $m_2$. We show that all the solutions with negative  total energy (potential plus kinetic), never collide.   ``Explicit'' solutions of the motion of all the three bodies are given in term of just one solution of an ordinary differential equation of order two of one real value function. We show that the solutions of a  big subfamily of this family remains bounded for all time. The same results are shown for the n-body problem. Once we have shown that there are infinitely many periodic solutions, we provide several examples of periodic examples for the 3-Body, 4-Body, 5-Body and 6-Body problem. This is a first version. Comments and suggestions for references are welcome. A YouTube video explaining one of this solutions has been posted on  http://youtu.be/2Wpv6vpOxXk

\end{abstract}

\subjclass[2000]{53C42, 53A10}
\maketitle

\section{Introduction}

\begin{figure}[hbtp]
\begin{center}
\includegraphics[width=.4\textwidth]{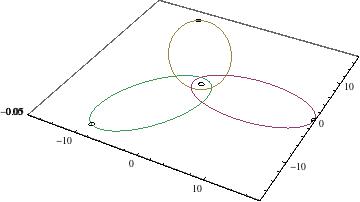}
\end{center}
\caption{The periodic solutions that we are considering have one body of mass $m_1$ moving up and down along a line. The other $n-1$ bodies have mass $m_2$, and move around. There are periodic solutions with $m_1>m_2$ and with $m_2>m_1$. In the solution shown above, $m_1=368.5$ and $m_2=484$. No units are given since we are assuming that the Gravitational constant is 1.  }
\end{figure}

Let us assume that $x(t)=(x_1(t),x_2(t),x_3(t))$, $y(t)=(y_1(t),y_2(t),y_3(t))$ and $z(t)=(z_1(t),z_2(t),z_3(t))$ describe the motion of three bodies with masses $m_1$, $m_2$ and $m_2$ respectively that move under the influence of the force of gravity. If we assume that the Gravitational constant is $1$, then, the functions $x$, $y$ and $z$ satisfy the equations

\begin{eqnarray*}
\ddot{x}(t)&=&\frac{m_2}{|y-x|^3}\, (y-x)+ \frac{m_2}{|z-x|^3}\, (z-x)\\
\ddot{y}(t)&=&\frac{m_1}{|x-y|^3}\, (x-y)+ \frac{m_2}{|z-y|^3}\, (z-y)\\
\ddot{z}(t)&=&\frac{m_1}{|x-z|^3}\, (x-z)+ \frac{m_2}{|y-z|^3}\, (y-z)
\end{eqnarray*}

We will study the case when the body $x(t)$ moves in the fixed line $l=\{(0,0,u):u\in \R\}$ and since our intention is not to study the three dimensional problem in a line (when all the bodies lies in a single line), then we will assume that at $t=0$, one of the other planets is not in the line $l$. Since we are assuming that  $x(t)$ lies in $l$, then we will write $x(t)=(0,0,f(t))$ for some function $f$. In order to make the study easier, we will assume that $f(0)=0$ and also we will assume that the center of mass does not move, this is, we will assume that $m_1x(t)+m_2y(t)+m_2z(t)=(0,0,0)$ for all $t$.  We proceed by noticing that:

\begin{center}{\it With the additional condition that the center of mass does not move, we have that the projections of the bodies $y$ and $z$ in the plane $\Pi=\{(u,v,0):u,v\in \R \}$ given by $Y=(y_1(t),y_2(t))$ and $Z=(z_1(t),z_2(t))$ satisfy that $Z=-Y$ and the projections of the bodies $y$ and $z$ on $l$ satisfy that $y_3=z_3=-\frac{m_1}{2m_2} f(t)$ . In particular there is a collision only when $r=|Y|=\sqrt{y_1(t)^2+y_2(t)^2}$ vanishes.
}
\end{center}

We then point out that not only the angular momentum of all the system remains constant but the angular momentum of each body remains constant. We have that 

\begin{center}{\it 
$ c_1=y_1(t)\dot{y}_2(t)-y_2(t)\dot{y}_1(t)$ is constant  
}
\end{center}

Besides the constant $c_1$ defined above, we will consider the constant $c_2$ given by the conservation of the total energy, this is 

\begin{center}{\it 
$$ c_2=-\frac{m_1m_2}{|x-y|}-\frac{m_1m_2}{|x-z|}-\frac{m_2^2}{|z-y|}+\frac{1}{2} \left(m_1 |\dot{x}|^2+m_2 |\dot{y}|^2+m_2 |\dot{z}|^2\right) $$
}\end{center}
In terms of $c_1$ and $c_2$ we show that

\begin{center}{\it  If $c_2<0$ and $c_1\ne 0$, then $0<a_1 <r(t)< b_1$ for some explicit expression $a_1$ and $b_1$ in terms of $c_1,c_2,m_1$ and $m_2$. See Theorem \ref{t2} }
\end{center}
Moreover, we prove that a big part of this family is made out of bounded solutions. More precisely we show:

\begin{center}{\it  If $c_2<0$, $c_1\ne 0$ and $8 m_1m_2^2+m_2^3+16 c_2c_1^2 >0$, then the solution is bounded}
\end{center}

Once we have shown that an open set of the whole family under consideration corresponds to bounded solutions, by the continuity of the solutions of an ordinary differential equation with respect to the initial conditions, we have that:

\begin{center}{\it there are infinitely many periodic solutions.}
\end{center}

With regards of finding explicit formulas for the solutions of $x$, $y$ and $z$,  we prove the following:

\begin{center}{\it  As in the two body problem we have that if $Y=(y_1,y_2)=r(\cos \theta,\sin\theta)$ then we have that $r^2(t)\dot{\theta}(t)=c_1$. Therefore, if we know $r(t)$, we can compute $\theta(t)$ by doing one integral. We prove that on any interval where $\dot{r}(t)$ does not vanish, if $f(t)=g(r(t))$ and $k=\frac{m_1+2m_2}{2 m_2}$, then $g$ satisfies the differential equation of the form
$
a(r,g,g^\prime)\,  g^{\prime\prime}(r)+b(r,g)\, g^\prime+c(g,r)=0\, .\, 
$
See Theorem \ref{t3} }
\end{center}

If we assume that we know the function $g$, we can find the function $r(t)$  by solving, using  separation of variables, the differential equation:

\begin{eqnarray*}
-\frac{8 (-1+k) m_2}{\sqrt{r^2+k^2g(r)^2}}+\frac{2 c_1^2-m_2 r+2 r^2 \left(1+(-1+k) k g'(r)^2\right) \dot{r}^2}{r^2}=\frac{2 c_2}{m_2}
\end{eqnarray*}

Therefore on any interval where $\dot{r}(t)$ does not vanish, we have explicit formulas for $x(t)$, $y(t)$ and $z(t)$ in terms of the function $g$ and antiderivatives of expressions of function $g$. Let us see this: once we have $g(r)$ we can find $r(t)$, then we can find $f(t)$ because $f(t)=g(r(t))$ and as mentioned earlier, we can find $\theta(t)$ using $r^2(t)\dot{\theta}(t)=c_1$. We then conclude that,

\begin{eqnarray*}
x(t)&=&(0,0,f(t))=(0,0,g(r(t)))\\
y(t)&=&(r(t) \cos(\theta(t)),r(t) \sin(\theta(t)),-\frac{m_1}{2m_2} f(t))\\
z(t)&=&(-r(t) \cos(\theta(t)),-r(t) \sin(\theta(t)),-\frac{m_1}{2m_2} f(t))
\end{eqnarray*}

We have the same type of results for the $n$-body problem. In this first version of this paper we will sketch the proofs and present the images of several of the periodic examples.

\section{The results for the 3-Body Problem}
The following theorem characterizes all the solutions of the problem that we are considering.

\begin{thm}\label{t1}
Let  $x=(0,0,f(t))$, $y(t)=(y_1(t),y_2(t),y_3(t))$ and $z(t)=(z_1(t),z_2(t),z_3(t))$ and $r(t)=\sqrt{y_1(t)^2+y_2(t)^2}$. $x$, $y$ and $z$
describe a solution of the Three Body Problem with masses $m_1$, $m_2$ and $m_2$ respectively that satisfies $r(0)\ne0$  and
\begin{eqnarray}\label{cm}
m_1x(t)+m_2y(t)+m_2z(t)=(0,0,0) 
\end{eqnarray}

if and only if 

\begin{eqnarray*}
y(t)&=&(r(t) \cos(\theta(t)),r(t) \sin(\theta(t)),-\frac{m_1}{2m_2} f(t))\\
z(t)&=&(-r(t) \cos(\theta(t)),-r(t) \sin(\theta(t)),-\frac{m_1}{2m_2} f(t))
\end{eqnarray*}

with

\begin{eqnarray}\label{e1}
\ddot{f}=\frac{-2m_2k f}{h^3}\quad\hbox{and}\quad \ddot{r}=\frac{c_1^2}{r^3}-\frac{m_2}{4 r^2}-2(k-1)m_2\frac{r}{h^3}
\quad\hbox{and}\quad r^2\dot{\theta}=c_1
\end{eqnarray}\label{ode1}
where $k=\frac{m_1+2m_2}{2m_2}$, $h=\sqrt{r^2+k^2f^2}$ and $c_1=y_1(0)\, \dot{y_2}(0)-y_2(0)\, \dot{y_1}(0)$
\end{thm}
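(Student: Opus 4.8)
The plan is to prove both implications of the equivalence, treating the forward (``only if'') direction as the substantive one and the converse as a direct verification. For the forward direction I would start from the center of mass relation (\ref{cm}). Reading off its three components and using $x=(0,0,f)$ gives immediately $z_1=-y_1$, $z_2=-y_2$, and $m_1 f+m_2(y_3+z_3)=0$, i.e.\ $y_3+z_3=-\frac{m_1}{m_2}f$. Thus the planar projections already satisfy $Z=-Y$, which in particular makes $|y-z|=2r$, and it only remains to split $y_3$ and $z_3$ symmetrically.

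The key step is to show $y_3=z_3$, and this is where the hypothesis that $x$ stays on $l$ must be used. Writing the first two components of the equation for $\ddot x$ and substituting $z_1=-y_1$, $z_2=-y_2$ yields
\[
\ddot x_1 = m_2 y_1\Big(\tfrac{1}{|y-x|^3}-\tfrac{1}{|z-x|^3}\Big),\qquad \ddot x_2 = m_2 y_2\Big(\tfrac{1}{|y-x|^3}-\tfrac{1}{|z-x|^3}\Big).
\]
Since $x$ remains on $l$ we have $\ddot x_1=\ddot x_2=0$, and because $r\ne 0$ at least one of $y_1,y_2$ is nonzero; hence $|y-x|=|z-x|$. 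As $|y-x|^2=r^2+(f-y_3)^2$ and $|z-x|^2=r^2+(f-z_3)^2$, this forces $(f-y_3)^2=(f-z_3)^2$, i.e.\ $(y_3-z_3)\big(2f-y_3-z_3\big)=0$. Combining with $y_3+z_3=-\frac{m_1}{m_2}f$ gives $(y_3-z_3)\big(2+\frac{m_1}{m_2}\big)f=0$, so $y_3=z_3$ at every instant where $f\ne0$; since the exceptional set $\{f=0\}$ has empty interior for a nontrivial solution, continuity gives $y_3\equiv z_3=-\frac{m_1}{2m_2}f=-(k-1)f$, using $k-1=\frac{m_1}{2m_2}$. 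In particular $f-y_3=kf$, so $|x-y|=|x-z|=\sqrt{r^2+k^2f^2}=h$.

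With the geometry fixed, I would read off the two differential equations directly from the equations of motion. The third component of $\ddot x$ gives $\ddot f=\frac{m_2}{h^3}\big((y_3-f)+(z_3-f)\big)=\frac{m_2}{h^3}(-2kf)$, which is the first equation in (\ref{e1}). Projecting the equation for $\ddot y$ onto $\Pi$ and using that its planar part is $X-Y=-Y$, $Z-Y=-2Y$, $|z-y|=2r$, yields the purely radial (central) law $\ddot Y=-\big(\frac{m_1}{h^3}+\frac{m_2}{4r^3}\big)Y$. Writing $Y=r(\cos\theta,\sin\theta)$, the tangential component gives $\frac{d}{dt}(r^2\dot\theta)=0$, hence $r^2\dot\theta=c_1$ with $c_1=y_1\dot y_2-y_2\dot y_1$, and the radial component gives $\ddot r-r\dot\theta^2=-\frac{m_1 r}{h^3}-\frac{m_2}{4r^2}$. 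Substituting $r\dot\theta^2=c_1^2/r^3$ and the identity $2(k-1)m_2=m_1$ produces exactly $\ddot r=\frac{c_1^2}{r^3}-\frac{m_2}{4r^2}-2(k-1)m_2\frac{r}{h^3}$.

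For the converse, I would substitute the stated forms of $y$ and $z$ (with $y_3=z_3=-(k-1)f$) back into (\ref{cm}) and into the three equations of motion, and verify that they hold whenever $f,r,\theta$ solve (\ref{e1}); this reverses the computations above and introduces no new ideas. The main obstacle is the middle paragraph: pinning down $y_3=z_3$ rigorously, in particular discarding the alternative branch $2f=y_3+z_3$ and dealing with possible instants where $f=0$. I expect the cleanest treatment is the algebraic identity $(y_3-z_3)\big(2+\frac{m_1}{m_2}\big)f=0$ together with a continuity/density argument, with the genuinely collinear degenerate case $f\equiv0$ excluded by the standing assumption that one of the bodies leaves the line $l$.
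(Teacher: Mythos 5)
Your proposal is correct and follows essentially the same route as the paper's own proof: use the center-of-mass condition to get $Z=-Y$ and the relation between $y_3$, $z_3$, use the vanishing of the planar components of $\ddot{x}$ together with $r\neq 0$ to force $|y-x|=|z-x|$, resolve the resulting algebraic dichotomy (with $f\equiv 0$ as the degenerate branch) to get $y_3=z_3=-\frac{m_1}{2m_2}f$, and then read off the equations for $f$, $r$, $\theta$ from the polar decomposition of $\ddot{Y}=-\bigl(\frac{m_1}{h^3}+\frac{m_2}{4r^3}\bigr)Y$. Your explicit factorization $(y_3-z_3)\bigl(2+\frac{m_1}{m_2}\bigr)f=0$ plus the continuity/density argument at instants where $f=0$ is in fact slightly more careful than the paper's pointwise case split, but it is the same argument in substance.
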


\begin{proof}
Let us prove one implication first. Let us assume that $x,y $ and $z$ satisfy the 3-Body ODE

\begin{eqnarray*}
\ddot{x}(t)&=&\frac{m_2}{|y-x|^3}\, (y-x)+ \frac{m_2}{|z-x|^3}\, (z-x)\\
\ddot{y}(t)&=&\frac{m_1}{|x-y|^3}\, (x-y)+ \frac{m_2}{|z-y|^3}\, (z-y)\\
\ddot{z}(t)&=&\frac{m_1}{|x-z|^3}\, (x-z)+ \frac{m_2}{|y-z|^3}\, (y-z)
\end{eqnarray*}

Let us denote by $Y=(y_1(t),y_2(t))$ and $Z(t)=(z_1(t),z_2(t))$. From Equation (\ref{cm}) we obtain that

\begin{eqnarray}\label{e1}
z_3=\frac{-m_1 f-m_2 y_3}{m_2}\quad\hbox{and}\quad Z=- Y
\end{eqnarray}

Using the first equation of the 3-Body ODE, we conclude that,

\begin{eqnarray}\label{e2}
\frac{m_2 Z}{|z-x|^3}=\frac{-m_2Y}{|y-x|^3}
\end{eqnarray}

Combining  equation (\ref{e1}) with equation (\ref{e2}) and the fact that $r(0)\ne0$, we conclude that $|z-x|=|y-x|$. A direct computation shows that 

\begin{eqnarray}\label{e3}
|z-x|^2=\frac{m_2 r^2+((m_2+m_1)f+m_2y_3)^2}{m_2^2}
\end{eqnarray}

\begin{eqnarray}\label{e3}
|y-x|^2=r^2+(f-y_3)^2
\end{eqnarray}

Since $|y-x|=|z-x|$ we conclude from the two expressions above that either $(m_2+m_1)f+m2y_3=m_2(f-y_3)$ and then 

\begin{eqnarray}\label{me3}
y_3=-\frac{m_1}{2 m_2}\, f
\end{eqnarray} 

or $(m_2+m_1)f+m2y_3=-m_2(f-y_3)$ and therefore $f=0$. When $f=0$, we conclude that $y=-z$ and that the motion of $y$ takes place in a planar conic. Up to rigid motion of $\R^3$, this case is covered in the solution presented in the theorem. For this reason we will assume  that Equation (\ref{e3}) holds. Notice that Equations (\ref{e1}) and (\ref{me3}) imply that 

\begin{eqnarray}\label{e4}
z_3=-\frac{m_1}{2 m_3}\, f
\end{eqnarray}

Using the first equation of the 3-Body Problem and Equations  (\ref{e3}) and (\ref{me3}) we conclude that 

\begin{eqnarray}\label{eqforf}
\ddot{f}=\frac{-2m_2k f}{h^3}\quad\hbox{where}\quad  k=\frac{m_1+2m_2}{2m_2},\,  \hbox{and}\, h=\sqrt{r^2+k^2f^2}
\end{eqnarray}

From the second equation of the 3-Body Problem we conclude that the function $Y$ satisfies the differential equation, 

\begin{eqnarray}\label{eqforY}
\ddot{Y}= -\left( \frac{m_1}{h^3}+\frac{m_2}{r^3}\right) Y
\end{eqnarray}

If we write $Y=(r\cos\theta,r\sin\theta)$ then we get that $\ddot{Y}=(\ddot{r}-r\dot{\theta}^2)\, (r\cos\theta,r\sin\theta)+(2 \dot{r}\dot{\theta}+r\ddot{\theta})\, (-r\sin\theta,r\cos\theta)$. Comparing this equation with Equation (\ref{eqforY}) we conclude that $\ddot{r}=\frac{c_1^2}{r^3}-\frac{m_2}{4 r^2}-2(k-1)m_2\frac{r}{h^3}
\quad\hbox{and}\quad r^2\dot{\theta}=c_1
$ where $c_1=y_1(0)\, \dot{y_2}(0)-y_2(0)\, \dot{y_1}(0)$. This finishes one implication of the theorem. The converse is a direct computation.

\end{proof}

\begin{cor}
When the function $f$ is identically zero, the body with mass $m_1$ will not move and the other two bodies will move on a conic.  For the bounded solutions the bodies move along ellipses and are periodic. The motion of each solution is essentially that of a binary star system for the bodies moving around while the body with mass $m_1$ stays at the center of mass.
\end{cor}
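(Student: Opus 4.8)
The plan is to reduce the system of Theorem \ref{t1} to the classical planar Kepler problem by setting $f\equiv 0$ and then to invoke the standard classification of its orbits. First I would observe that $f\equiv 0$ is consistent with the equation $\ddot f=-2m_2kf/h^3$ of Theorem \ref{t1}, since then both sides vanish; moreover $x(t)=(0,0,f(t))=(0,0,0)$ for all $t$, so the body of mass $m_1$ sits permanently at the origin, which by (\ref{cm}) is the stationary center of mass. Because $y_3=z_3=-\frac{m_1}{2m_2}f=0$, both remaining bodies stay in the plane $\Pi$, and since $Z=-Y$ they are antipodal about the origin at every instant.

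Next I would substitute $h=\sqrt{r^2+k^2f^2}=r$ into the radial equation of Theorem \ref{t1}. Using $k-1=\frac{m_1}{2m_2}$ one finds $2(k-1)m_2=m_1$, so the term $2(k-1)m_2\,r/h^3$ collapses to $m_1/r^2$ and the equation becomes
\begin{eqnarray*}
\ddot r=\frac{c_1^2}{r^3}-\frac{\mu}{r^2},\qquad r^2\dot\theta=c_1,\qquad \mu=m_1+\frac{m_2}{4}.
\end{eqnarray*}
This is exactly the reduced equation for a single particle in an attractive inverse-square central field of strength $\mu$; the value $\mu=m_1+m_2/4$ records that $Y$ is pulled toward the origin both by $m_1$ at distance $r$ and by its mirror partner of mass $m_2$ at distance $2r$.

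From here I would run the usual Binet-equation argument: writing $u=1/r$ as a function of $\theta$ and using $r^2\dot\theta=c_1$ turns the radial equation into the linear ODE $u''(\theta)+u=\mu/c_1^2$, whose general solution $u=\frac{\mu}{c_1^2}\bigl(1+e\cos(\theta-\theta_0)\bigr)$ yields the conic $r=\frac{c_1^2/\mu}{1+e\cos(\theta-\theta_0)}$. The orbit is an ellipse, parabola, or hyperbola according as $e<1$, $e=1$, or $e>1$, and one checks through the energy constant $c_2$ that the bounded case is precisely $e<1$; such orbits are closed ellipses and hence periodic. Since $Z=-Y$, the body $z$ traces the ellipse obtained by reflecting that of $y$ through the origin, so the two equal masses revolve on congruent ellipses about their common center of mass while $m_1$ stays fixed there --- exactly the binary-system picture asserted by the corollary.

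I do not expect a serious obstacle, since the problem genuinely collapses to Kepler; the only point needing care is that the Binet substitution is valid only on intervals where $\dot r\ne 0$, so I would note that the conic formula obtained there extends across the turning points by smoothness and uniqueness for the ODE, giving a single global conic rather than a merely local one.
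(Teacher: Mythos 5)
The paper offers no proof of this corollary (it is stated as an immediate consequence of Theorem \ref{t1}), and your argument --- setting $f\equiv 0$, collapsing the radial equation to $\ddot r = c_1^2/r^3 - \mu/r^2$ with $\mu = m_1 + m_2/4$ (the coefficient $2(k-1)m_2=m_1$ is computed correctly, and $m_2/4$ indeed reflects the mirror partner at distance $2r$), and classifying the orbits via Binet --- is precisely the intended reduction to the two-body/Kepler problem, which the paper itself alludes to later, so your proposal is correct and essentially the paper's approach. One small correction: the Binet substitution $u=1/r$ as a function of $\theta$ requires $\dot\theta\neq 0$, not $\dot r\neq 0$, and since $\dot\theta = c_1/r^2$ never vanishes when $c_1\neq 0$ the substitution is globally valid, making your turning-point caveat unnecessary (though you should note that the case $c_1=0$, degenerate collinear motion, is implicitly excluded).
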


The following theorem shows that when the total energy is negative, then, there is not collision.

\begin{thm}\label{t2}
If a solution of the 3-Body problem considered in Theorem \ref{t1} has nonzero angular momentum, this is $c_1\ne0$, and it also has negative total energy, this is: 

\begin{eqnarray}\label{te1}
c_2=-2\frac{m_1m_2}{h}-\frac{m_2^2}{2 r}+m_2 \dot{Y}\cdot\dot{Y} + \frac{m_1^2+2 m_1m_2}{4m_2} \dot{f}^2 <0
\end{eqnarray}

then, there is no collision. Moreover

\begin{eqnarray} \label{dDelta}
D=16 c_1^2 c_2 m_2+(4 m_1m_2+m_2^2)^2>0
\end{eqnarray}

and 

\begin{eqnarray} \label{rb}
\frac{-4m_1m_2-m_2^2+\sqrt{D}}{4c_2}<r(t)< \frac{-4m_1m_2-m_2^2-\sqrt{D}}{4c_2}
\end{eqnarray}

\end{thm}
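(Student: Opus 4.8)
The plan is to combine the conservation of the energy $c_2$ with the angular-momentum relation $r^2\dot\theta=c_1$ so as to reduce everything to a single inequality for $r(t)$. First I would use $r^2\dot\theta=c_1$ to rewrite the planar kinetic term as $\dot Y\cdot\dot Y=\dot r^2+c_1^2/r^2$, so that the energy identity (\ref{te1}) becomes
\begin{eqnarray*}
c_2=-\frac{2m_1m_2}{h}-\frac{m_2^2}{2r}+m_2\dot r^2+\frac{m_2c_1^2}{r^2}+\frac{m_1^2+2m_1m_2}{4m_2}\dot f^2 .
\end{eqnarray*}
The two kinetic terms $m_2\dot r^2$ and $\frac{m_1^2+2m_1m_2}{4m_2}\dot f^2$ are nonnegative, and the deficit coming from $h=\sqrt{r^2+k^2f^2}\ge r$ is nonnegative as well, so dropping the kinetic terms and replacing $h$ by $r$ can only decrease the right-hand side. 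This yields the one-variable bound
\begin{eqnarray*}
\frac{m_2c_1^2}{r^2}-\frac{4m_1m_2+m_2^2}{2r}\le c_2 .
\end{eqnarray*}

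Next I would clear denominators. Multiplying by $r^2>0$ and moving everything to one side gives the quadratic inequality
\begin{eqnarray*}
c_2\,r^2+\frac{4m_1m_2+m_2^2}{2}\,r-m_2c_1^2\ge0 .
\end{eqnarray*}
Since $c_2<0$ this is a downward-opening parabola in $r$, so the inequality holds exactly on the closed interval bounded by its two roots. A direct computation shows that the discriminant of this quadratic equals $D/4$ with $D$ as in (\ref{dDelta}), and that its two roots are precisely the two expressions appearing in (\ref{rb}); hence $r(t)$ is trapped between them. Real roots must exist because the actual $r(t)$ satisfies the inequality and a downward parabola is nonnegative only on a nonempty bounded interval, which forces $D\ge0$. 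Finally, since the collision set is $\{r=0\}$, I must check that the smaller root is strictly positive: this root has the sign of $-(4m_1m_2+m_2^2)+\sqrt D$ divided by $4c_2<0$, and it is positive precisely when $D<(4m_1m_2+m_2^2)^2$, i.e. when $16m_2c_1^2c_2<0$, which is immediate from $c_2<0$, $c_1\ne0$ and $m_2>0$. Thus $r$ stays bounded away from $0$ and no collision occurs.

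The one delicate point, which I expect to be the main obstacle, is upgrading $D\ge0$ to the strict $D>0$ (and correspondingly the strict two-sided bound in (\ref{rb}) rather than a single admissible value of $r$). Equality $D=0$ would force the quadratic to have a double root, so $r(t)$ would be constant with $\dot r\equiv0$; tracing back through the estimate, equality throughout also forces $\dot f\equiv0$ and $h\equiv r$, i.e. $f\equiv0$. This is exactly the circular coplanar motion of the Corollary. Hence for every solution that is not this degenerate binary circle the estimate is strict on a time interval, which pushes $r(t)$ strictly inside the interval and yields $D>0$ and the strict inclusion (\ref{rb}); I would dispose of the circular case separately, where $r$ is constant and the no-collision conclusion is trivial. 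Verifying that the algebraic discriminant is genuinely $D/4$ and that the two roots match (\ref{rb}) verbatim is routine bookkeeping, but it is the step where the computation must be carried out with care.
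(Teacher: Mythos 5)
Your proposal follows essentially the same route as the paper's proof: rewrite the energy via $\dot Y\cdot\dot Y=\dot r^2+c_1^2/r^2$, discard the nonnegative kinetic terms, use $h\ge r$, and trap $r$ between the two positive roots of the downward-opening quadratic $c_2u^2+\frac{4m_1m_2+m_2^2}{2}\,u-m_2c_1^2$, whose discriminant is exactly $D/4$ and whose roots are the bounds in (\ref{rb}). Your closing discussion of the equality case is in fact a point the paper glosses over: its chain of strict inequalities silently fails when $\dot r=\dot f=f=0$, and for the coplanar circular solution ($f\equiv0$, $r$ constant) a direct computation gives $D=0$ exactly, so separating out that degenerate case, as you do, is genuinely needed for the strict claims (\ref{dDelta}) and (\ref{rb}) --- though note that for coplanar elliptic solutions $r$ still attains the interval endpoints at the apsides, so even for non-circular solutions the strict inclusion in (\ref{rb}) can only hold in the non-strict sense at those instants.
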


\begin{proof}

A direct computation shows that $\dot{Y}=\dot{r} (\cos\theta,\sin\theta)+r\dot{\theta}(-\sin\theta,\cos\theta)$. It follows that 
$\dot{Y}\cdot\dot{Y}= \dot{r}^2+r^2(\theta^\prime)^2=\dot{r}^2+\frac{c_1^2}{r^2}$ and therefore Equation (\ref{te1} )) transforms into

\begin{eqnarray}\label{te1}
c_2=m_2\frac{c_1^2}{r^2} - 2\frac{m_1m_2}{h} - \frac{m_2^2}{2r}+\frac{m_1^2+2 m_1m_2}{4m_2} \dot{f}^2 +  m_2\dot{r}^2
\end{eqnarray}

From Equation (\ref{te1}) we get that $m_2\frac{c_1^2}{r^2} - 2\frac{m_1m_2}{h} - \frac{m_2^2}{2r}<c_2$ and therefore

$$-c_2 r^2<2m_1m_2 \frac{r^2}{h}+\frac{m_2^2}{2}\, r -m_2c_1^2<2m_1m_2 r+\frac{m_2^2}{2}\, r -m_2c_1^2$$

The result follows because we know that $r$ is positive and lies in the domain where the parabola $u \to c_2u^2+2m_1m_2 u+\frac{m_2^2}{2}\, u -m_2c_1^2$ is positive.

\end{proof}

\begin{cor}
If $c_2<0$ and $c_1\ne0$, then the solution is defined for all time. 
\end{cor}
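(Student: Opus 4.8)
The plan is to derive global existence from the standard continuation principle for ODEs: a solution of a system with smooth right-hand side extends to all time unless it leaves every compact subset of its domain in finite time. By Theorem \ref{t1}, the motion is governed by the subsystem in the variables $(r,\dot r,f,\dot f)$, which is self-contained (the equations for $\ddot f$ and $\ddot r$ do not involve $\theta$), while $\theta$ is afterwards recovered by the quadrature $r^2\dot\theta=c_1$. The right-hand side of this subsystem is smooth wherever $r>0$, since $h=\sqrt{r^2+k^2f^2}\ge r$ shows that $h$ cannot vanish when $r>0$. Hence it suffices to show that $(r,\dot r,f,\dot f)$ stays in a fixed compact subset of $\{r>0\}$ on every finite interval; the continuation principle then gives existence for all $t$, forward and backward.

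First I would record what Theorem \ref{t2} already supplies. Under the standing hypotheses $c_2<0$ and $c_1\ne0$, that theorem yields $D>0$ and $0<a_1<r(t)<b_1$, where $a_1$ is the positive lower bound in Equation (\ref{rb}). Thus $r$ is bounded away from both $0$ and $\infty$, which rules out collision, and since $h\ge r>a_1$ the quantity $h$ is also bounded below by a positive constant. This eliminates every collision-type singularity; what remains is to prevent $f$ (and the velocities) from escaping to infinity in finite time.

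The key step is to bound the velocities using conservation of energy. Isolating the kinetic terms in the energy relation of Theorem \ref{t2}, Equation (\ref{te1}), gives
\begin{eqnarray*}
\frac{m_1^2+2 m_1m_2}{4m_2}\,\dot f^2 + m_2\,\dot r^2 = c_2 - m_2\frac{c_1^2}{r^2} + \frac{2m_1m_2}{h} + \frac{m_2^2}{2r}.
\end{eqnarray*}
On the region $a_1<r<b_1$, $h>a_1$ every term on the right is bounded above, so the right-hand side is at most a constant $C$ depending only on $c_1,c_2,m_1,m_2$ and $a_1$. Because the two terms on the left are each nonnegative (their coefficients $\frac{m_1^2+2m_1m_2}{4m_2}$ and $m_2$ being positive), both $\dot r^2$ and $\dot f^2$ are bounded by $C$. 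In particular $|\dot f|\le M$ for a fixed $M$, so $f$ is Lipschitz and on any finite interval $[0,T]$ satisfies $|f(t)|\le|f(0)|+MT$. Hence $f$ cannot blow up in finite time, and consequently $h=\sqrt{r^2+k^2f^2}$ is also bounded above on $[0,T]$.

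Collecting these estimates, on each finite interval the state $(r,\dot r,f,\dot f)$ lies in a compact subset of $\{r>0\}$: $r\in[a_1,b_1]$, $f$ in a bounded interval, and $\dot r,\dot f$ bounded; the quadrature $\dot\theta=c_1/r^2\le c_1/a_1^2$ then shows $\theta$ likewise stays finite. The continuation principle therefore forces the maximal interval of existence to be all of $\R$. I expect the only genuine content to be the energy bound on $\dot f$; once that is secured, the rest is the routine ODE-extension argument. The main obstacle, then, is simply to observe that Theorem \ref{t2} controls $r$ and $h$ from below while energy conservation controls the velocities, so that $f$ has no room to escape in finite time.
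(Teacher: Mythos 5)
Your proof is correct, and it follows the same route the paper (implicitly) intends: the paper offers no written proof of this corollary, merely asserting it after Theorem \ref{t2} (and again in Remark \ref{me}, where global existence is attributed to the boundedness of $r$). Your write-up supplies exactly the step that this terse reasoning omits and that genuinely needs saying: bounded $r$ alone does not rule out finite-time blow-up of $f$, $\dot f$, or $\dot r$, so one must rearrange the energy identity (\ref{te1}) to bound the kinetic terms --- legitimate because $r>a_1$ forces $h\ge r>a_1$, making every potential term on the right-hand side bounded --- and then invoke the standard continuation principle on the self-contained $(r,\dot r,f,\dot f)$ subsystem, whose right-hand side is smooth on $\{r>0\}$. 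One cosmetic remark: your final bound on the angular variable should read $|\dot\theta|\le |c_1|/a_1^2$, since $c_1$ may be negative; this does not affect the argument.
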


\begin{mydef}\label{fam1}
We will denote by 
\begin{eqnarray*}
\mathbb{L}&=&\{(x,y,z)\in \R^9: x,y,z \, \hbox{is a solution of the 3-Body Problem satisfying }\\
    & &\quad x(0)=(0,0,0),\,  y(0)=(y_{10},0,0)\, z(0)=(-y_{10},0,0) , \, c_2<0,\, c_1\ne 0\\
    & &\quad x^\prime(0)=(0,0,df_0),\,  y^\prime(0)=(0,dy_{20},-\frac{m_1df_0}{2 m_2}),\, z^\prime(0)=(0,-dy_{20},-\frac{m_1df_0}{2 m_2})\}\\
\end{eqnarray*}
where $c_1=y_{10} dy_{20}$ and $c_2=m_2 dy_{20}^2-\frac{m_2}{2y_{10}} (4m_1+m_2)+\frac{m_1}{4 m_2} (m_1+2m_2) df_0$
\end{mydef}
\begin{rem}\label{me}
The space $\mathbb{L}$ contains solutions of the three body problem that do not collide because $r$ is bounded. Also they are defined for all $t$. The initial conditions were taken so that the body $x$ starts at the origin and the derivative of $r$ at $t=0$ is zero. Notice that $\mathbb{L}$ can be parametrized by the initial conditions and the value of the masses, this it, it can be parametrized by points in $\R^5$ of the form $q=(y_{10},dy_{20},df_0,m_1,m_2)$. We will denote by $\mathbb{L}(q)$ the solution that satisfies the initial conditions  given by the point $q$.

\end{rem}

\begin{thm}\label{bs3BP}
The solutions of the 3-Body Problem considered in Theorem \ref{t1} with $c_1\ne0$, $c_2<0$ and $8 m_1m_2^2+m_2^3+16 c_2c_1^2 <0$ are bounded
\end{thm}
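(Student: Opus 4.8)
The plan is to use Theorem \ref{t2} to eliminate the radial variable and then to read off the boundedness of $f$ directly from the conserved energy (\ref{te1}). Since the hypotheses include $c_1\neq 0$ and $c_2<0$, Theorem \ref{t2} already confines $r(t)$ to the bounded interval $[a_1,b_1]$, where $a_1=\frac{-4m_1m_2-m_2^2+\sqrt D}{4c_2}>0$ and $D=16c_1^2c_2m_2+(4m_1m_2+m_2^2)^2>0$. Because $x=(0,0,f)$ while $y$ and $z$ have planar modulus $r$ and third coordinate $-\frac{m_1}{2m_2}f$, the whole solution is bounded exactly when $f$ is bounded; and since $h^2=r^2+k^2f^2$ with $r$ already bounded, it is enough to bound $h$.

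To bound $h$, I would discard the two nonnegative kinetic terms $\frac{m_1^2+2m_1m_2}{4m_2}\dot f^2+m_2\dot r^2$ from the conserved energy (\ref{te1}), which turns the conservation law into the pointwise constraint
$$\frac{2m_1m_2}{h}\ \ge\ \frac{m_2c_1^2}{r^2}-\frac{m_2^2}{2r}-c_2\ =:\ \phi(r).$$
Thus, on any set of $r$-values where $\phi$ stays bounded below by a positive constant, $h$ is bounded above. Writing $\phi(r)=-P_1(r)/r^2$ with $P_1(r)=c_2r^2+\frac{m_2^2}{2}r-m_2c_1^2$, the requirement $\phi>0$ on the compact interval $[a_1,b_1]$ is the same as $P_1<0$ there; by compactness and continuity $\phi$ then attains a positive minimum, which is the desired bound on $h$.

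The crux is therefore to show that the hypothesis $8m_1m_2^2+m_2^3+16c_2c_1^2<0$ forces $P_1<0$ on $[a_1,b_1]$. The key observation is that $P_1(r)=Q(r)-2m_1m_2\,r$, where $Q(r)=c_2r^2+\frac{4m_1m_2+m_2^2}{2}r-m_2c_1^2$ is precisely the downward parabola from the proof of Theorem \ref{t2} that vanishes at $a_1,b_1$ and is nonnegative between them; in particular $P_1(a_1)=-2m_1m_2a_1<0$ and $P_1(b_1)=-2m_1m_2b_1<0$. Since $P_1$ is itself a downward parabola with vertex at $r_1^\ast=\frac{m_2^2}{-4c_2}$, it suffices to check that this vertex lies to the left of the whole interval, i.e. $r_1^\ast\le a_1$, for then $P_1$ is decreasing on $[a_1,b_1]$ and its maximum there is $P_1(a_1)<0$. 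The one genuine computation is the elementary equivalence
$$r_1^\ast\le a_1\iff \sqrt D\le 4m_1m_2\iff D\le 16m_1^2m_2^2\iff 8m_1m_2^2+m_2^3+16c_2c_1^2\le 0,$$
whose last step uses $(4m_1m_2+m_2^2)^2=16m_1^2m_2^2+m_2(8m_1m_2^2+m_2^3)$ in the expansion of $D$. Under the strict hypothesis this places $r_1^\ast$ strictly left of $a_1$, so $P_1<0$ throughout $[a_1,b_1]$, whence $\phi$ has a positive minimum, $h$ is bounded, and therefore $f$ and the entire solution are bounded.

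I expect the main obstacle to be exactly this last equivalence: recognizing that the opaque combination $8m_1m_2^2+m_2^3+16c_2c_1^2$ is nothing but the condition $\sqrt D\le 4m_1m_2$ locating the vertex of $P_1$ relative to the endpoint $a_1$. Everything else is routine once one has identified the correct object to control, namely $h$ through the auxiliary function $\phi$, rather than attempting to bound $f$ from its own equation of motion.
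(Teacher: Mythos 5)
Your proposal is correct, and it takes a genuinely different route from the paper's own proof. The paper argues dynamically and by contradiction: since $r$ is trapped in $(a_1,b_1)$ and obeys the second-order radial equation, $\ddot r$ must vanish along a sequence of times; if $f$ were unbounded it would have to diverge monotonically to $\pm\infty$, and evaluating the radial equation at a zero of $\ddot r$ with $h=\sqrt{r^2+k^2f^2}$ large (so the $h^{-3}$ term is negligible, pinning $r$ near $4c_1^2/m_2$) and feeding this into the energy inequality yields $8m_1m_2^2+m_2^3+16c_2c_1^2\ge 0$, contradicting the hypothesis --- though the paper only asserts, without proof, the two dynamical claims about sign changes of $\ddot r$ and monotone divergence of $f$. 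You instead extract a uniform static bound straight from the conserved quantity: discarding the nonnegative kinetic terms in (\ref{te1}) gives $2m_1m_2/h\ge -P_1(r)/r^2$ with $P_1(r)=c_2r^2+\tfrac{m_2^2}{2}r-m_2c_1^2$, your identity $P_1=Q-2m_1m_2r$ (with $Q$ the parabola from Theorem \ref{t2} vanishing at $a_1,b_1$) makes the endpoint values $P_1(a_1)=-2m_1m_2a_1$ and $P_1(b_1)=-2m_1m_2b_1$ automatically negative, and your vertex equivalence is right: multiplying $r_1^\ast\le a_1$ by $4c_2<0$ flips the inequality to $\sqrt D\le 4m_1m_2$, and $D-16m_1^2m_2^2=m_2\left(8m_1m_2^2+m_2^3+16c_2c_1^2\right)$ closes the chain. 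Hence $P_1<0$ on $[a_1,b_1]$, giving the explicit bound $h\le b_1^2/a_1$, so $f$ and the whole solution are bounded. Your approach buys a complete, quantitative proof where the paper's is only a sketch, and it needs no dynamics at all (the second-order equation for $r$ is never used, only the energy identity and Theorem \ref{t2}); it even proves slightly more than stated, since the global maximum of $P_1$ equals $-\frac{m_2}{16c_2}\left(m_2^3+16c_2c_1^2\right)$, so the weaker hypothesis $m_2^3+16c_2c_1^2<0$ already suffices for boundedness. As a side benefit, your computation confirms that the sign $<0$ in the theorem statement is the correct one; the introduction states the same inequality with $>0$, which your argument shows must be a typo.
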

\begin{proof}
Since $c_2<0$ and $c_1\ne0$, then $r$ is bounded. Since $r$ satisfies the second order differential Equation \ref{ode1}, then  $r$ has not horizontal asymptotes,  and since $r$ is bounded, then $\ddot{r}$ changes sign infinitely many times. We can also show that if  $f$ is unbounded then $f(t)$ converges to either infinity or negative infinity. This is a contraction because $\ddot{r}=0$, $c_2<0$ and $f$ large imply that $8 m_1m_2^2+m_2^3+16 c_2c_1^2 \ge 0$

\begin{mydef}\label{fam1}
We will denote by 
\begin{eqnarray*}
\mathbb{B}&=&\{(x,y,z)\in \mathbb{L}:  8 m_1m_2^2+m_2^3+16 c_2c_1^2 <0,\}
\end{eqnarray*}
where $c_1=y_{10} dy_{20}$ and $c_2=m_2 dy_{20}^2-\frac{m_2}{2y_{10}} (4m_1+m_2)+\frac{m_1}{4 m_2} (m_1+2m_2) df_0$

\end{mydef}

\end{proof}

\begin{prop}
For a given $\theta_0$ of the form $a \pi$ where  $a$ is a rational number, let us consider

\begin{eqnarray*}\label{xi}
\xi(y_{10},dy_{20},df_0,m_1,m_2,t_0)= (f^\prime(t_0)-df_0)^2+(\theta(t_0)-\theta_0)^2+r^\prime(t_0)^2+f(t_0)^2 
\end{eqnarray*}

We have that a solution in $\mathbb{L}$ is periodic with period $t_0$ if and only if $\xi(y_{10},dy_{20},df_0,m_1,m_2,t_0)=0$

\end{prop}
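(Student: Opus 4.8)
The plan is to push everything through the reduced system for $(f,r,\theta)$ of Theorem~\ref{t1} and to exploit its reversibility. First I would record what the defining data of $\mathbb{L}$ say about that system: the conditions $x(0)=(0,0,0)$, $y(0)=(y_{10},0,0)$, $x'(0)=(0,0,df_0)$ and $y'(0)=(0,dy_{20},-\tfrac{m_1 df_0}{2m_2})$ translate into $f(0)=0$, $\dot f(0)=df_0$, $r(0)=y_{10}$, $\dot r(0)=0$, $\theta(0)=0$ and $\dot\theta(0)=dy_{20}/y_{10}$. The forward implication is then immediate from the definition of a periodic orbit: if the solution has period $t_0$, then every coordinate of $x,y,z$ and its velocity returns at $t_0$, so $f(t_0)=f(0)=0$, $\dot f(t_0)=\dot f(0)=df_0$ and $\dot r(t_0)=\dot r(0)=0$, while $Y(t_0)=Y(0)$ forces $\theta(t_0)\equiv 0\ (\mathrm{mod}\ 2\pi)$, i.e. $\theta(t_0)=2\pi n=a\pi$ with $a$ an even (hence rational) integer. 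All four squared terms of $\xi$ then vanish.

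For the converse the engine is \emph{reversibility}. The $(f,r)$ part of the system of Theorem~\ref{t1} is autonomous, the equation for $f$ is odd in $f$, and the equation for $r$ depends on $f$ only through $h^2=r^2+k^2f^2$; hence the whole system is invariant under the involution $(t,f,r,\theta)\mapsto(-t,-f,r,-\theta)$. The $\mathbb{L}$-data sits on the fixed (``symmetric'') section $\{f=0,\ \dot r=0,\ \theta=0\}$, so by uniqueness of solutions the orbit is symmetric about $t=0$:
\begin{eqnarray*}
f(-t)=-f(t),\qquad r(-t)=r(t),\qquad \theta(-t)=-\theta(t).
\end{eqnarray*}
Now suppose $\xi(\dots,t_0)=0$. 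The terms $f(t_0)^2$ and $r'(t_0)^2$ say that the orbit returns to this same section at $t_0$, so the identical uniqueness argument applied at $t_0$ gives a second reflection, $f(t_0+s)=-f(t_0-s)$, $r(t_0+s)=r(t_0-s)$ and $\theta(t_0+s)+\theta(t_0-s)=2\theta(t_0)$. Composing the reflection about $0$ with the reflection about $t_0$ produces a discrete translation:
\begin{eqnarray*}
f(t+2t_0)=f(t),\qquad r(t+2t_0)=r(t),\qquad \theta(t+2t_0)=\theta(t)+2\theta(t_0).
\end{eqnarray*}

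Thus $f$ and $r$ are already $2t_0$-periodic and only the angle fails to close, advancing by $2\theta(t_0)$ per period. Here the remaining terms of $\xi$ enter. The condition $\theta(t_0)=\theta_0=a\pi$ with $a=p/q$ rational makes the angular advance commensurable with $2\pi$: after $q$ steps $\theta$ increases by $2q\theta(t_0)=2p\pi\equiv 0\ (\mathrm{mod}\ 2\pi)$, and since $f,r$ have period dividing $2qt_0$, the positions $x,y,z$ and their velocities all repeat, so the solution is periodic. The term $(\dot f(t_0)-df_0)^2$ plays a double role: it matches the velocity of $f$ at $t_0$ with that at $0$, and it is exactly what is needed in the energy bookkeeping below.

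The step I expect to be the main obstacle is reconciling this reflection-generated period with the period $t_0$ asserted in the statement, i.e. proving that the reduced state $(f,\dot f,r,\dot r)$ truly closes up at $t_0$ and not merely at a proper multiple. Conservation of the energy $c_2$, which by Theorem~\ref{t2} depends only on $(f,\dot f,r,\dot r)$, gives --- after using $f(t_0)=0$ (so $h(t_0)=r(t_0)$) and $\dot f(t_0)=df_0$ to cancel the kinetic $f$-term --- the identity $\phi(r(t_0))=\phi(r(0))$ with $\phi(s)=m_2c_1^2/s^2-\left(2m_1m_2+\tfrac{m_2^2}{2}\right)/s$. But $\phi$ has a unique interior critical point on the interval to which Theorem~\ref{t2} confines $r$, so it is two-to-one there and this identity alone admits a spurious second root $r(t_0)\neq r(0)$. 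Ruling out that root --- equivalently, showing the first return to the symmetric section with matching $f$-velocity occurs at the \emph{same} value of $r$ --- is the delicate point; it is precisely what decides whether the minimal period equals $t_0$ or a proper multiple, while the rationality of $a$ guarantees that in any case some finite multiple of $t_0$ is a genuine period.
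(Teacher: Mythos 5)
Your proposal is correct in substance, but it follows a genuinely different --- and in fact more robust --- route than the paper. The paper's entire proof of this proposition is a one-line appeal to existence and uniqueness for the ODE: it reads $\xi=0$ as saying that at $t_0$ the reduced state has returned to its initial value (up to the rotation by $\theta_0$), whence the motion repeats. As you observed, that argument silently requires $r(t_0)=y_{10}$, and $\xi$ contains no term controlling $r(t_0)$; conservation of $c_1$ and $c_2$ constrains it only through your function $\phi$, which is two-to-one on the interval furnished by Theorem \ref{t2}, so the paper's uniqueness argument as stated has exactly the gap you isolate. Your double-reflection argument repairs it: the conditions $f(t_0)=0$ and $r'(t_0)=0$ put the orbit back on the symmetric section, the two reflections compose to the translation $t\mapsto t+2t_0$ under which $f$ and $r$ are periodic while $\theta$ advances by $2\theta_0$, and rationality of $a$ closes the angle after $q$ steps. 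Note that this argument never uses $r(t_0)=y_{10}$, nor even $f'(t_0)=df_0$, so the ``delicate point'' you flag at the end obstructs only the literal claim of period $t_0$, not periodicity itself --- your own reflection construction already renders the spurious root harmless for the conclusion that matters. Moreover, since the paper's examples have $\theta_0\notin 2\pi\mathbb{Z}$ (e.g.\ $\theta_0=\frac{7\pi}{6}$), ``periodic with period $t_0$'' in the proposition can only mean relative periodicity, and for the paper's purpose (producing infinitely many genuinely periodic orbits) your conclusion of a true period dividing $2qt_0$ suffices. What each approach buys: the paper's uniqueness route would give relative period exactly $t_0$, but to make it sound one should add the missing term $(r(t_0)-y_{10})^2$ to $\xi$; your reversibility route proves periodicity from strictly weaker hypotheses at the cost of only controlling the period up to the factor $2q$.
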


\begin{proof}
It follows by the existence and uniqueness theorem of ordinary differential equations.
\end{proof}

\begin{thm}
There are infinitely many periodic non trivial solution of the 3BP in the family $\mathbb{B}$.
\end{thm}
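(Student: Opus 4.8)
The plan is to recast the periodicity test of the Proposition as a \emph{symmetric} shooting problem. First I would note that the reduced equations for $f$ and $r$ in Theorem \ref{t1} involve $f$ only through $f^{2}$ (via $h=\sqrt{r^{2}+k^{2}f^{2}}$), while every solution in $\mathbb{L}$ begins with $f(0)=0$ and $\dot r(0)=0$. Hence the involution sending the solution $t\mapsto(f(t),r(t))$ to $t\mapsto(-f(-t),r(-t))$ again solves the system and fixes these initial data, so by uniqueness $f$ is odd and $r$ is even in $t$; since $r^{2}\dot\theta=c_{1}$, the angle $\theta$ is odd as well. The gain is that the closing conditions are halved: if some $t_{0}>0$ has $f(t_{0})=0$ and $\dot r(t_{0})=0$, the same reflection centered at $t_{0}$ shows that $(r,f)$ is periodic with period $2t_{0}$, and integrating $\dot\theta=c_{1}/r^{2}$ over one period gives that $\theta$ advances by exactly $2\theta(t_{0})$. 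Therefore, if moreover $\theta(t_{0})=\tfrac{p}{q}\pi$ with $p/q\in\mathbb{Q}$, then after $q$ periods $\theta$ has advanced by $2p\pi$ and all three bodies return to their positions and velocities: the solution is periodic of period $2qt_{0}$. This is exactly why the Proposition may test periodicity with $\theta_{0}=a\pi$, $a$ rational.

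Next I would fix a concrete starting point for a continuation. Setting $df_{0}=0$ forces $f\equiv 0$, and the in-plane motion then obeys $\ddot r=c_{1}^{2}/r^{3}-(m_{1}+\tfrac{m_{2}}{4})/r^{2}$, an attractive inverse-square law; these are precisely the Keplerian binary orbits described in the Corollary to Theorem \ref{t1}. For such an orbit the half radial period supplies a time $t_{0}$ with $\dot r(t_{0})=0$, while $f(t_{0})=0$ is automatic and $\theta(t_{0})=\pi$, so $a=1$. A direct check shows these orbits can be chosen inside $\mathbb{B}$: one needs only $c_{2}<0$ together with $16\,|c_{2}|\,c_{1}^{2}>m_{2}^{2}(8m_{1}+m_{2})$, which is feasible for any fixed masses. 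These are the trivial ($df_{0}=0$) members of $\mathbb{B}$, and the spatial solutions will be continued off them.

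I would then run a continuation-and-counting argument on the map $\Xi=(\,f(t_{0}),\ \dot r(t_{0}),\ \dot f(t_{0})-df_{0},\ \theta(t_{0})-a\pi\,)$, which depends continuously on $(y_{10},dy_{20},df_{0},m_{1},m_{2},t_{0})$ by continuous dependence of solutions on the data and vanishes exactly at the periodic solutions. Starting from the base orbits I would solve the two symmetric-return conditions $f(t_{0})=0$ and $\dot r(t_{0})=0$ for part of the parameters (the remaining matches then holding automatically at the true period $2t_{0}$), leaving a positive-dimensional solution set on which the rotation number $\nu=\theta(t_{0})/\pi$ is a continuous function. On the trivial locus $\{df_{0}=0\}$ the value $\nu$ is an integer, so once I show that $\nu$ is non-constant it must sweep a nondegenerate interval; every non-integer rational $p/q$ in that interval then produces, by the first step, a periodic solution that is automatically non-trivial and lies in the open set $\mathbb{B}$ (Theorem \ref{bs3BP}). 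Distinct rationals give distinct solutions, so this yields infinitely many.

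The main obstacle is the transversality behind the third step, since the Keplerian base orbits are degenerate. I would break this degeneracy at the linear level: linearizing $\ddot f=-2m_{2}kf/h^{3}$ about $f\equiv 0$ yields the Hill equation $\ddot w=-\dfrac{m_{1}+2m_{2}}{r(t)^{3}}\,w$, whose vertical frequency is governed by $m_{1}+2m_{2}$, whereas the in-plane radial and angular frequencies are governed by $m_{1}+\tfrac{m_{2}}{4}$; on a circular orbit $\omega_{f}^{2}=(m_{1}+2m_{2})/r_{c}^{3}$ while $\omega_{\theta}^{2}=(m_{1}+\tfrac{m_{2}}{4})/r_{c}^{3}$, and these never coincide. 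This genuine splitting of the vertical and horizontal frequencies is what keeps the two symmetric-return conditions independent as $df_{0}$ is switched on and forces $\nu$ to move. Converting this frequency mismatch into the required non-degeneracy---for instance via a Lyapunov-center family of spatial normal modes bifurcating from the circular orbits, along which $\nu$ varies monotonically---is the crux; once it is in hand, the density of the rationals completes the proof.
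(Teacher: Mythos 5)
Your reduction is correct as far as it goes, and it is worth saying up front that it is far more substantive than the paper's own argument, which consists of the single sentence that the result ``follows by the continuous dependence on the initial condition'' of the reduced ODE. Your reversibility observation is sound: the $(f,r)$ system depends on $f$ only through $f^{2}$, the initial data in $\mathbb{L}$ satisfy $f(0)=0$, $\dot r(0)=0$, so $f$ is odd, $r$ even, and the reflection argument correctly shows that $f(t_{0})=0$, $\dot r(t_{0})=0$ forces $2t_{0}$-periodicity of $(f,\dot f,r,\dot r)$ with angular advance $2\theta(t_{0})$; this cleanly explains the role of $\theta_{0}=a\pi$ in the paper's Proposition. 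Your base-family computations also check out: with $df_{0}=0$ one gets $f\equiv 0$ and $\ddot r=c_{1}^{2}/r^{3}-(m_{1}+\tfrac{m_{2}}{4})/r^{2}$, a Kepler problem with $\theta(t_{0})=\pi$ at the half radial period; and on a circular orbit one computes $c_{2}=-\tfrac{m_{2}}{r_{c}}(m_{1}+\tfrac{m_{2}}{4})$, $c_{1}^{2}=(m_{1}+\tfrac{m_{2}}{4})r_{c}$, so that $16|c_{2}|c_{1}^{2}-m_{2}^{2}(8m_{1}+m_{2})=16m_{1}^{2}m_{2}>0$: the circular orbits lie strictly inside $\mathbb{B}$ for \emph{all} masses, confirming your feasibility claim.

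However, the proposal has a genuine gap, and you name it yourself: nothing in the argument actually shows that the rotation number $\nu=\theta(t_{0})/\pi$ is non-constant along the continued family, and without that the density-of-rationals step produces nothing. Two specific points remain open. First, the existence of the continued family itself: solving the two symmetric-return conditions $f(t_{0})=0$, $\dot r(t_{0})=0$ as $df_{0}$ is switched on requires an implicit-function or Lyapunov-center argument, and the latter needs the nonresonance condition $\omega_{f}\neq k\,\omega_{r}$; since $\omega_{f}/\omega_{r}=\sqrt{(m_{1}+2m_{2})/(m_{1}+\tfrac{m_{2}}{4})}\in(1,2\sqrt{2})$, the ratio can hit the integer $2$ exactly when $m_{2}=3m_{1}$, a case you would need to exclude or treat separately. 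Second, and more seriously, the monotonicity (or mere non-constancy) of $\nu$ along the spatial Lyapunov family is a quantitative statement about how the vertical amplitude shifts the apsidal angle; the frequency mismatch $\omega_{f}\neq\omega_{r}$ that you invoke guarantees the bifurcation but not the variation of $\nu$, which requires a second-order (normal-form) computation you have not performed. So the proposal is a correct and essentially standard \emph{blueprint} --- reversibility, symmetric shooting, continuation off the planar Kepler orbits --- that, if completed, would supply the rigorous proof the paper lacks; but as written it stops at the same decisive step that the paper waves away, merely identifying it more honestly.
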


\begin{proof}
It follows by the continuous dependence on the initial condition of the ordinary differential equation \ref{ode1}. 
\end{proof}

\begin{thm}\label{t3}
For every solution of the 3-Body problem considered in Theorem \ref{t1}, we have that 
\begin{eqnarray}\label{odefg}
a(r,g,g^\prime)\,  g^{\prime\prime}(r)+b(r,g)\, g^\prime+c(g,r)=0
\end{eqnarray}

where

\begin{eqnarray*}
a(r,g,g^\prime)&=&\frac{\left(8 (1-k) m_2^2 r^2+\left(2 c_1^2 m_2-r \left(m_2^2+2 c_2 r\right)\right) \sqrt{r^2+k^2 g(r)^2}\right)}{2 m_2 r^2 \sqrt{r^2+k^2 g(r)^2} \left(1+(k-1) k g'(r)^2\right)} \\
b(r,g)&=&-\left(\frac{c_1^2}{r^3}-\frac{m_2}{4 r^2}-\frac{2 (-1+k) m_2 r}{\left(r^2+k^2 g(r)^2\right)^{3/2}}\right) \\
c(r,g)&=& -\frac{2 k m_2 g(r)}{\left(r^2+k^2 g(r)^2\right)^{3/2}} 
\end{eqnarray*}

\end{thm}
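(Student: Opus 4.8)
The plan is to treat $r$ as the independent variable on a time interval where $\dot r\neq 0$ (which is exactly the hypothesis that makes $f=g(r)$ define a function $g$) and to fold the two second-order equations of motion of Theorem~\ref{t1} into a single autonomous equation for $g$. Differentiating $f(t)=g(r(t))$ twice by the chain rule gives $\dot f=g'(r)\,\dot r$ and $\ddot f=g''(r)\,\dot r^2+g'(r)\,\ddot r$. Substituting this, together with the explicit formula $\ddot r=\frac{c_1^2}{r^3}-\frac{m_2}{4r^2}-2(k-1)m_2\frac{r}{h^3}$ from Theorem~\ref{t1}, into the equation of motion $\ddot f=-\frac{2m_2k\,g(r)}{h^3}$ already produces a relation $\alpha\,g''+\beta\,g'+\gamma=0$; the only quantity in it not yet expressed through $r,g,g'$ is the factor $\dot r^2$ multiplying $g''$.

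The key step is therefore to eliminate $\dot r^2$, and for this I would use the conserved energy $c_2$ of Theorem~\ref{t2}, rewritten via $\dot Y\cdot\dot Y=\dot r^2+c_1^2/r^2$ as
$$c_2=m_2\frac{c_1^2}{r^2}-\frac{2m_1m_2}{h}-\frac{m_2^2}{2r}+\frac{m_1^2+2m_1m_2}{4m_2}\,\dot f^2+m_2\,\dot r^2.$$
Using $\dot f^2=g'(r)^2\dot r^2$ together with the algebraic identities $m_1=2(k-1)m_2$ and $\frac{m_1^2+2m_1m_2}{4m_2}=m_2k(k-1)$, this collapses to
$$\frac{c_2}{m_2}=\frac{c_1^2}{r^2}-\frac{2m_1}{h}-\frac{m_2}{2r}+\dot r^2\bigl(1+(k-1)k\,g'(r)^2\bigr),$$
which I can solve for $\dot r^2$ as an explicit function of $r,g,g'$. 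Notice that the factor $1+(k-1)k\,g'^2$ is precisely the quantity appearing in the denominator of the coefficient $a$, so it is this substitution that forces that denominator into the final equation.

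Substituting the resulting expression for $\dot r^2$ into the relation of the first paragraph and clearing the common denominator $2m_2r^2h\bigl(1+(k-1)k\,g'^2\bigr)$, I expect the coefficient of $g''$ to assemble into $a(r,g,g')$, the coefficient of $g'$ into $b(r,g)$, and the remaining term into $c(r,g)$, up to one global sign. Concretely, placing the $g''$-coefficient over that common denominator yields numerator $8(k-1)m_2^2r^2+h\bigl(2c_2r^2-2c_1^2m_2+m_2^2r\bigr)$, which is exactly the negative of the numerator of $a$; since the stated $b$ and $c$ already carry explicit minus signs, matching these signs shows the whole identity is $(-1)$ times $a\,g''+b\,g'+c=0$, hence equivalent to it.

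I expect the substantive content to lie entirely in this elimination of $\dot r^2$ through the energy integral; after that, the remaining work is the bookkeeping of combining three rational expressions over the common denominator and verifying the single overall sign. The one point demanding care is the recognition $4m_1m_2=8(k-1)m_2^2$, which is what converts the energy term $4m_1m_2 r^2$ into the summand $8(1-k)m_2^2r^2$ recorded in $a$; getting this identity and the global sign right is the only place where an error is likely to enter.
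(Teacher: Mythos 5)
Your proposal is correct and follows essentially the same route as the paper: differentiate $f(t)=g(r(t))$ twice, use the equations of motion of Theorem \ref{t1} to obtain $c(r,g)+b(r,g)\,g'-\dot{r}^2\,g''=0$, then eliminate $\dot{r}^2$ by solving the energy relation (\ref{te1}) with $\dot f=\dot r\,g'$. Your sign bookkeeping is also right (indeed, with the paper's arrangement $-\dot r^2\,g''$ one finds $-\dot r^2=a$ exactly, so the global factor of $-1$ you track is just an artifact of your chosen normalization), and your identities $m_1=2(k-1)m_2$ and $\frac{m_1^2+2m_1m_2}{4m_2}=k(k-1)m_2$ check out.
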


\begin{proof}
We will show if $f(t)=g(r(t))$, then $g$ satisfies the Equation (\ref{odefg}). Once we have shown this,  the converse statement, which is the theorem here, would be a direct computation.   A direct computation using Equation (\ref{e1}) shows us that the Equation $0=\frac{d^2(f(t)-g(r(t))}{dt^2}$ is equivalent to  
\begin{eqnarray}\label{tempe1}
c(r,g)+b(r,g) g^\prime(r)-(\dot{r})^2\, g^{\prime\prime}=0
\end{eqnarray}
Replacing $\dot{f}$ by $\dot{r}g^\prime$ in Equation (\ref{te1}) allows to solve $\dot{r}^2$ in terms of $r, \, g$ and $g^\prime$. Replacing this expression for $\dot{r}^2$ in Equation (\ref{tempe1}) completes the proof of this theorem.

\end{proof}

\section{The results for the 4-Body Problem}


\begin{thm}\label{t4}
Let  $x=(0,0,f(t))$, $y(t)=(y_1(t),y_2(t),y_3(t))$, $z(t)=(z_1(t),z_2(t),z_3(t))$ and $w(t)=(w_1(t),w_2(t),w_3(t))$ and $r(t)=\sqrt{y_1(t)^2+y_2(t)^2}$. $x$, $y$ $z$ and $w$
describe a solution of the Four Body Problem with masses $m_1$, $m_2$, $m_2$ and $m_2$ respectively that satisfy $\{y(0),z(0),w(0)\}$ are linearly independent,  $r(0)\ne0$   and
\begin{eqnarray}\label{cm}
m_1x(t)+m_2y(t)+m_2z(t)+m_2w(t)=(0,0,0) 
\end{eqnarray}

if and only if 

\begin{eqnarray*}
x(t)&=&\left(0,0,f(t)\right)\\
y(t)&=&\left(r(t)\cos(\theta(t))\, ,\, r(t)\sin(\theta(t))\, ,\, -\frac{m_1}{3 m_2}\, f(t)\right)\\
z(t)&=&\left(r(t)\cos(\theta(t)+\frac{2 \pi}{3})\, ,\, r(t)\sin(\theta(t)+\frac{2 \pi}{3})\, ,\, -\frac{m_1}{3 m_2}\, f(t)\right)\\
w(t)&=&\left(r(t)\cos(\theta(t)+\frac{4 \pi}{3})\, ,\, r(t)\sin(\theta(t)+\frac{4\pi}{3})\, ,\, -\frac{m_1}{3m_2}\, f(t)\right)\\
\end{eqnarray*}

with

\begin{eqnarray}\label{e2}
\ddot{f}&=&-\frac{(m_1+3m_2)f}{h^3}\\
 \ddot{r} &=& \frac{c_1^2}{r^3}-\frac{3 m_2}{l^3 r^2}-\frac{m_1r}{h^3} \quad\hbox{and}\quad r^2\dot{\theta}=c_1
\end{eqnarray}

where  $h=\sqrt{r^2+(\frac{m_1+3m_2}{3m_2})^2f^2}$, $l= \sqrt{3}$ and $c_1=y_1(0)\, \dot{y_2}(0)-y_2(0)\, \dot{y_1}(0)$

\end{thm}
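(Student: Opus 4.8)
The plan is to follow the template of the proof of Theorem \ref{t1} as closely as possible, treating the three equal-mass bodies $y,z,w$ the way $y,z$ were treated there. The argument splits into the forward implication (a solution obeying the stated constraints must have the symmetric form and satisfy the ODEs) and the converse (the symmetric ansatz solves the system); the converse is a direct, if tedious, substitution. For the converse I would record the two structural facts that make it work: all three bodies sit at the common height $-\frac{m_1}{3m_2}f$, so that $h=|x-y|=|x-z|=|x-w|=\sqrt{r^2+(\frac{m_1+3m_2}{3m_2})^2 f^2}$, and any two of $y,z,w$ are separated by the equilateral-triangle chord $|y-z|=|y-w|=|z-w|=\sqrt3\,r=l\,r$. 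Substituting these into the vertical part of the $x$-equation yields $\ddot f=-\frac{(m_1+3m_2)}{h^3}f$, while substituting them into the $y$-equation, together with $Z+W=-Y$, collapses the combined pull of $z$ and $w$ on $y$ to $\frac{m_2}{l^3 r^3}\big((Z-Y)+(W-Y)\big)=-\frac{3m_2}{l^3 r^3}Y$; writing $Y=r(\cos\theta,\sin\theta)$ and separating radial and angular parts then reproduces $\ddot r=\frac{c_1^2}{r^3}-\frac{3m_2}{l^3 r^2}-\frac{m_1 r}{h^3}$ and $r^2\dot\theta=c_1$ exactly as in Theorem \ref{t1}.

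For the forward implication I would first extract the consequences of the center-of-mass condition (\ref{cm}): projecting onto the horizontal plane $\Pi$ gives $Y+Z+W=0$, and the vertical component gives $y_3+z_3+w_3=-\frac{m_1}{m_2}f$. Since $x$ is confined to the line $l$, the horizontal part of $\ddot x$ must vanish identically, so the first equation of the 4-Body system forces
\[
\frac{Y}{|y-x|^3}+\frac{Z}{|z-x|^3}+\frac{W}{|w-x|^3}=0 .
\]
This is the analogue of the step in Theorem \ref{t1} that produced $|y-x|=|z-x|$, except that it is now a relation among three planar vectors rather than two.

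The key step, replacing the trivial identity $Z=-Y$ of the three-body case, is to combine the two planar relations. Substituting $W=-Y-Z$ into the weighted sum turns it into $\big(|y-x|^{-3}-|w-x|^{-3}\big)Y+\big(|z-x|^{-3}-|w-x|^{-3}\big)Z=0$. The linear-independence hypothesis on $\{y(0),z(0),w(0)\}$ is exactly the nondegeneracy needed here: if the three planar projections spanned only a line, then $y,z,w$ would lie in a common $2$-plane and be dependent, so at least two of $Y,Z,W$ are linearly independent, both coefficients above vanish, and $|y-x|=|z-x|=|w-x|=:d$. Equal distances already deliver the $f$-equation, for the vertical part of $\ddot x$ becomes $\frac{m_2}{d^3}\big((y_3+z_3+w_3)-3f\big)=-\frac{(m_1+3m_2)}{d^3}f$.

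The main obstacle is promoting ``equal distances to $x$'' to the full rigid symmetry, namely $y_3=z_3=w_3$ together with $|Y|=|Z|=|W|$ and mutual angles $2\pi/3$; in the three-body case this was automatic because $Z=-Y$ pinned everything down, whereas here equal distances alone leaves a positive-dimensional family of admissible configurations. I would attack this by differentiating the established constraints $Y+Z+W=0$, $y_3+z_3+w_3=-\frac{m_1}{m_2}f$, and $|y-x|=|z-x|=|w-x|$ in $t$, feeding in the $y,z,w$ equations of motion to show that the only configurations consistent with these relations for all $t$ are the equilateral ones, and then propagating the symmetry from a single instant by uniqueness of solutions of the ordinary differential equation. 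Once the equilateral, common-height structure is in force, the reduction of the $y$-equation to the stated $\ddot r$ and $r^2\dot\theta=c_1$ is identical to the corresponding step in Theorem \ref{t1}, and the proof closes.
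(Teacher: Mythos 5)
Your converse direction is correct and complete: the common height $-\frac{m_1}{3m_2}f$, the equal chords $|y-z|=|y-w|=|z-w|=\sqrt{3}\,r$, and the collapse of the pull of $z,w$ on $y$ to $-\frac{3m_2}{l^3r^3}Y$ via $Z+W=-Y$ are exactly the facts that make the substitution work. The first half of your forward argument is also sound, and is genuinely more than the paper supplies --- the paper's entire proof of this theorem is the sentence that it is ``a direct computation similar to'' Theorem \ref{t1}. Your derivation of $|y-x|=|z-x|=|w-x|$ from the vanishing horizontal part of $\ddot{x}$, by substituting $W=-Y-Z$ and invoking the linear-independence hypothesis (which, as you correctly observe, is what guarantees two of the planar projections are independent), is the right replacement for the three-body step where the center of mass alone forced $Z=-Y$. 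One small omission there: independence of $\{y,z,w\}$ is assumed only at $t=0$, so to get equal distances for all $t$ you need, e.g., analyticity of $n$-body solutions to conclude the planar projections degenerate at most on a discrete set of times.

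The genuine gap is the rigidity step you yourself flag and then leave as a plan: nothing in the proposal shows that equal distances to $x$, together with $Y+Z+W=0$ and $y_3+z_3+w_3=-\frac{m_1}{m_2}f$, force the equilateral common-height configuration. At a fixed instant these constraints leave a positive-dimensional family of admissible configurations, and differentiating them in $t$ injects the three mutual distances $|y-z|,|y-w|,|z-w|$ as new unknowns, so it is not at all evident that finitely many differentiations close the system; the assertion that ``the only configurations consistent with these relations for all $t$ are the equilateral ones'' is precisely the hard content of the forward implication, and it is nowhere established. The proposed endgame is also flawed as stated: propagating symmetry ``from a single instant by uniqueness'' of the ODE requires an instant at which both the positions \emph{and the velocities} carry the $2\pi/3$ symmetry, whereas all your constraints are positional, so uniqueness cannot be invoked from them alone. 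In fairness, the paper proves none of this either --- its one-line appeal to Theorem \ref{t1} glosses over exactly this point, since the identity $Z=-Y$ that trivialized the three-body configuration has no four-body analogue (and the paper's $n$-body Theorem \ref{t7} quietly retreats to the ansatz-only statement). Your proposal correctly locates the true difficulty, but as written the forward implication remains unproven.
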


\begin{proof}
This is a direct computation similar to those made on the proof of Theorem (\ref{t1}). 
\end{proof}

\begin{cor}
When the function $f$ is identically zero, the body with mass $m_1$ will not move and the other two bodies will move on a conic.  For the bounded solutions the bodies move along ellipses and are periodic. The motion of each solution is essentially that of a Trinary start system for the bodies moving around and the body with mass $m_1$ stays at the center of mass.
\end{cor}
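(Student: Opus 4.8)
The plan is to specialize the equations of motion from Theorem~\ref{t4} to the case $f\equiv 0$ and then recognize the remaining system as a classical Kepler (planar two-body) problem in disguise. First I would record that $f\equiv 0$ is a consistent invariant of the dynamics: since the right-hand side of $\ddot f=-\frac{(m_1+3m_2)f}{h^3}$ vanishes when $f=0$, the initial data $f(0)=\dot f(0)=0$ forces $f\equiv 0$ by uniqueness, and hence $x(t)\equiv(0,0,0)$, so the body of mass $m_1$ never moves and sits permanently at the center of mass. Moreover $y_3=z_3=w_3=-\frac{m_1}{3m_2}f\equiv 0$, so the three orbiting bodies remain in the plane $\Pi=\{z=0\}$ for all time.

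Next I would simplify the radial equation. With $f\equiv 0$ one has $h=\sqrt{r^2+(\frac{m_1+3m_2}{3m_2})^2 f^2}=r$ and $l^3=(\sqrt3)^3=3\sqrt3$, so the term $\frac{3m_2}{l^3 r^2}$ becomes $\frac{m_2}{\sqrt3\,r^2}$ while $\frac{m_1 r}{h^3}$ becomes $\frac{m_1}{r^2}$. Substituting these into $\ddot r=\frac{c_1^2}{r^3}-\frac{3m_2}{l^3 r^2}-\frac{m_1 r}{h^3}$ collapses it to
\begin{eqnarray*}
\ddot r=\frac{c_1^2}{r^3}-\frac{\mu}{r^2},\qquad \mu:=m_1+\frac{m_2}{\sqrt3},
\end{eqnarray*}
which, together with $r^2\dot\theta=c_1$, is exactly the radial and angular-momentum pair for a point mass moving under an inverse-square central force of strength $\mu$ centered at the origin. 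The constant $\frac1{\sqrt3}$ is geometrically transparent: two vertices of an equilateral triangle inscribed in a circle of radius $r$ lie at distance $\sqrt3\,r$, which is the common source of both the $l=\sqrt3$ appearing in Theorem~\ref{t4} and the reduced self-attraction $\frac{m_2}{\sqrt3}$.

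From here I would invoke the classical theory of the Kepler problem. Writing $u=1/r$ and using $r^2\dot\theta=c_1$, a routine change of variable turns the radial equation into the Binet equation $u''(\theta)+u=\mu/c_1^2$, whose general solution $u=\frac{\mu}{c_1^2}\bigl(1+e\cos(\theta-\theta_0)\bigr)$ describes a conic with focus at the origin. Thus the planar trajectory $Y=(r\cos\theta,r\sin\theta)$ lies on a conic, and so do the rotated copies traced by $z$ and $w$ (obtained by advancing $\theta$ by $\frac{2\pi}{3}$ and $\frac{4\pi}{3}$); all three conics are congruent and share the common focus at the origin where $m_1$ rests. For the bounded solutions the energy is negative, the eccentricity satisfies $e<1$, and the conic is an ellipse, so each body traverses a closed ellipse periodically. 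This is precisely the picture of three equal masses orbiting their common center of mass — a trinary system — the only modification being that the fixed mass $m_1$ at the focus augments the effective gravitational parameter from the pure self-attraction value $\frac{m_2}{\sqrt3}$ to $\mu=m_1+\frac{m_2}{\sqrt3}$.

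The statement is essentially a corollary of Theorem~\ref{t4} together with classical mechanics, so there is no genuine obstacle; the only point requiring care is the exact algebraic collapse of the two separate force terms into a single inverse-square law when $f\equiv 0$, and a clean statement of what \emph{bounded} means. I would make the latter precise by identifying the bounded solutions with those of negative reduced energy (the $f\equiv 0$ specialization of the conserved energy $c_2$ from Theorem~\ref{t2}), which is exactly the classical criterion $e<1$ guaranteeing an elliptic, hence periodic, orbit.
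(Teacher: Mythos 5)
Your proposal is correct and follows exactly the route the paper intends: the corollary is stated without a written proof precisely because, once $f\equiv 0$ (so $h=r$), the system of Theorem~\ref{t4} collapses to the classical Kepler pair $\ddot r=\frac{c_1^2}{r^3}-\frac{\mu}{r^2}$, $r^2\dot\theta=c_1$ with $\mu=m_1+\frac{m_2}{\sqrt 3}$, whose bounded orbits are ellipses. Your added details --- the uniqueness argument making $f\equiv 0$ an invariant, the Binet reduction, and the identification of the effective parameter $\mu$ --- are accurate fillings-in of that same computation rather than a different approach.
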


\begin{thm}\label{t5}
If a solution of the 4-Body problem considered in in Theorem \ref{t4} has nonzero angular momentum, this is $c_1\ne0$, and it also has negative total energy, then, there is no collision. Moreover

\begin{eqnarray} \label{dDelta}
D=6 c_1^2 c_2 m_2+(3 m_1m_2+\frac{3}{l} m_2^2)^2>0
\end{eqnarray}

and 

\begin{eqnarray} \label{rb}
\frac{-3m_1m_2-\frac{3}{l} m_2^2+\sqrt{D}}{2c_2}<r(t)<\frac{-3m_1m_2-\frac{3}{l} m_2^2-\sqrt{D}}{2c_2}
\end{eqnarray}

\end{thm}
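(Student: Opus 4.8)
The plan is to run the argument of Theorem \ref{t2} essentially verbatim, adapted to the symmetric four‑body configuration of Theorem \ref{t4}. First I would write the total energy $c_2$ explicitly using the distances dictated by that configuration: the three distances from $x$ to the outer bodies are each equal to $h$, and the three mutual distances among $y,z,w$ are each equal to $lr=\sqrt{3}\,r$ (two points on a circle of radius $r$ separated by angle $2\pi/3$). Computing the kinetic energy with $\dot Y\cdot\dot Y=\dot r^2+\tfrac{c_1^2}{r^2}$ and the common vertical speed $\tfrac{m_1}{3m_2}\dot f$ for each outer body, I expect the energy to reduce to
\begin{eqnarray*}
c_2=\frac{3}{2}m_2\frac{c_1^2}{r^2}-3\frac{m_1m_2}{h}-3\frac{m_2^2}{lr}+\frac{m_1(m_1+3m_2)}{6m_2}\dot f^2+\frac{3}{2}m_2\dot r^2 .
\end{eqnarray*}

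Next, since the two kinetic terms $\tfrac{m_1(m_1+3m_2)}{6m_2}\dot f^2$ and $\tfrac{3}{2}m_2\dot r^2$ are nonnegative, dropping them yields the strict inequality $\tfrac{3}{2}m_2\tfrac{c_1^2}{r^2}-3\tfrac{m_1m_2}{h}-3\tfrac{m_2^2}{lr}<c_2$. Multiplying through by $r^2>0$ and using $h=\sqrt{r^2+k^2f^2}\ge r$, so that $\tfrac{r^2}{h}\le r$, I would replace the $x$–body gravitational term by its upper bound to obtain $0<c_2 r^2+\bigl(3m_1m_2+\tfrac{3}{l}m_2^2\bigr)r-\tfrac{3}{2}m_2c_1^2$. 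Thus $r(t)$ lies in the region where the quadratic $u\mapsto c_2u^2+(3m_1m_2+\tfrac3l m_2^2)u-\tfrac32 m_2 c_1^2$ is positive. Because $c_2<0$ this parabola opens downward, so it can take positive values only if it has two real roots; this forces its discriminant $D=(3m_1m_2+\tfrac3l m_2^2)^2+6c_2m_2c_1^2>0$, which is exactly \eqref{dDelta}. The values of $u$ where the parabola is positive are precisely those strictly between its two roots $\tfrac{-3m_1m_2-\frac3l m_2^2\pm\sqrt D}{2c_2}$, and since dividing by $2c_2<0$ reverses the ordering of the $\pm$ branches, this gives the two‑sided bound \eqref{rb} with a strictly positive lower bound $a_1$.

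Finally, to conclude there is no collision I would note, as in the three‑body case, that a collision between two outer bodies occurs only when $lr=0$ and a collision between $x$ and an outer body only when $h=0$; both require $r=0$. Since the lower bound in \eqref{rb} is strictly positive, $r(t)$ never vanishes, so no collision can occur. I do not expect any genuine obstacle here: the only real work is the bookkeeping that produces the correct coefficients in the energy expression (the factors of $3$ from the three outer bodies, the factor $\tfrac1l$ from their mutual distance $\sqrt3\,r$, and the vertical kinetic contribution through $\tfrac{m_1}{3m_2}$), after which the parabola argument is identical to Theorem \ref{t2}. The one point worth double‑checking is that the inequality $h\ge r$ is applied in the correct direction when it is multiplied by the negative coefficient $-3m_1m_2$, so that the replacement weakens the inequality in the intended way.
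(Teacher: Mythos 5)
Your proposal is correct and is essentially the paper's intended argument: the paper's proof of Theorem \ref{t5} is explicitly ``a direct computation similar to Theorem \ref{t2},'' and your energy expression (the factor $3$ from the three outer bodies, the mutual distance $lr=\sqrt{3}\,r$, and the vertical kinetic coefficient $\frac{m_1(m_1+3m_2)}{6m_2}$), the bound $\frac{r^2}{h}\le r$, and the downward parabola $u\mapsto c_2u^2+\bigl(3m_1m_2+\tfrac{3}{l}m_2^2\bigr)u-\tfrac{3}{2}m_2c_1^2$ reproduce exactly the quantities $D$ and the root interval in \eqref{dDelta}--\eqref{rb}, with the correct ordering of the roots after dividing by $2c_2<0$. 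Your closing observation that every collision forces $r=0$, excluded by the positive lower root, matches the three-body reasoning, so no gap remains.
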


\begin{proof}
This is a direct computation similar to those made on the proof of Theorem (\ref{t2}). 
\end{proof}

\begin{thm}\label{bs4BP}
The solutions of the 4-Body Problem considered in Theorem \ref{t4} with $c_1\ne0$, $c_2<0$ and $2 \sqrt{3}  m_1m_2^2+m_2^3+2 c_2c_1^2 <0$ are bounded
\end{thm}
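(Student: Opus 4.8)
The plan is to follow, step for step, the proof of the three-body analogue (Theorem \ref{bs3BP}), replacing the three-body data by their four-body counterparts. The central object is the conserved total energy, which for the symmetric configuration of Theorem \ref{t4} reads
\begin{eqnarray*}
c_2=\frac{3m_2c_1^2}{2r^2}-\frac{3m_1m_2}{h}-\frac{3m_2^2}{l\,r}+\frac{m_1(m_1+3m_2)}{6m_2}\,\dot f^2+\frac{3m_2}{2}\,\dot r^2 .
\end{eqnarray*}
This is obtained exactly as in the three-body case: compute the kinetic energy of the four symmetric bodies using $r^2\dot\theta=c_1$ and the common height $-\frac{m_1}{3m_2}f$, and the six pairwise potentials using $|x-y|=|x-z|=|x-w|=h$ together with $|y-z|=|y-w|=|z-w|=l\,r$. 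As a consistency check, dropping the two nonnegative kinetic terms and using $r^2/h<r$ reproduces precisely the downward parabola whose positivity interval is the band of Theorem \ref{t5} and whose discriminant is the quantity $D$ appearing there.

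First I would extract from the hypotheses $c_1\ne0$, $c_2<0$ the two facts that drive the argument. By Theorem \ref{t5} the radius is trapped in a fixed band $0<a\le r(t)\le b$; and since $r$ solves the second-order equation of Theorem \ref{t4} it has no horizontal asymptote, so a bounded $r$ must have $\ddot r$ changing sign infinitely often. Thus there is a sequence $t_n\to\infty$ with $\ddot r(t_n)=0$, at which the $r$-equation gives $\frac{m_1r}{h^3}=\frac{c_1^2}{r^3}-\frac{3m_2}{l^3r^2}$. Because the left-hand side is strictly positive (as $h>r>0$), every inflection point satisfies $r(t_n)<\frac{\sqrt{3}\,c_1^2}{m_2}=:r_*$.

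Next I would reduce boundedness of the whole solution to boundedness of $f$, and then control $f$. Since $r\in[a,b]$ the terms $\frac{3m_2c_1^2}{2r^2}-\frac{3m_2^2}{l\,r}$ are bounded and $0<\frac{3m_1m_2}{h}\le\frac{3m_1m_2}{a}$; solving the energy identity for $\dot f^2$ and discarding the nonpositive $\dot r^2$ term then bounds $\dot f$. I then claim that an unbounded $f$ must tend to $+\infty$ or to $-\infty$. The mechanism is that $\ddot f=-(m_1+3m_2)f/h^3$, with $h=\sqrt{r^2+\left(\frac{m_1+3m_2}{3m_2}\right)^2 f^2}$, is a restoring force toward $f=0$ whose strength decays like $1/f^2$ for large $|f|$, i.e. an effective one-dimensional gravitational well with a single minimum at $f=0$; such an orbit, if unbounded, cannot keep recrossing $0$ and must escape monotonically. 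This is the step I expect to be the main obstacle, and it is exactly the point asserted without proof in Theorem \ref{bs3BP}: the difficulty is that $h$ couples $f$ to the oscillating $r$, so the $f$-subsystem is not autonomous and the well picture must be justified using the already-established boundedness of $r$ and of $\dot f$.

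Granting this, I finish by contradiction. If $f$ is unbounded then $f\to\pm\infty$, hence $h(t_n)\to\infty$; the relation $\frac{m_1r}{h^3}=\frac{c_1^2}{r^3}-\frac{3m_2}{l^3r^2}$ then forces $r(t_n)\to r_*$ while $3m_1m_2/h(t_n)\to0$. Evaluating the energy at $t_n$, discarding the nonnegative velocity terms, and letting $n\to\infty$ gives
\begin{eqnarray*}
c_2\ \ge\ \frac{3m_2c_1^2}{2r_*^2}-\frac{3m_2^2}{l\,r_*}=-\frac{m_2^3}{2c_1^2},
\end{eqnarray*}
that is $m_2^3+2c_1^2c_2\ge0$, so $2\sqrt{3}\,m_1m_2^2+m_2^3+2c_2c_1^2>0$, contradicting the hypothesis. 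Hence $f$ is bounded; together with $a\le r\le b$ and $y_3=z_3=w_3=-\frac{m_1}{3m_2}f$ this bounds every coordinate of $x,y,z,w$, so the solution is bounded.
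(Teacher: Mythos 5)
Your proposal is correct and takes essentially the same route as the paper: the paper proves Theorem \ref{bs3BP} by trapping $r$, producing inflection times $t_n\to\infty$ where $\ddot r=0$, and deriving a contradiction from the energy when $f$ is unbounded (asserting, without proof, that $r$ has no horizontal asymptote and that unbounded $f$ must tend to $\pm\infty$), and it dismisses the 4-body case as ``a direct computation similar'' to that one --- which is exactly what you execute, with the correct 4-body energy, the correct constants $l=\sqrt3$ and $r_*=\sqrt3\,c_1^2/m_2$, and the same two unproven steps honestly flagged rather than silently assumed. As a minor bonus, your limiting computation at the $t_n$ actually yields the stronger inequality $m_2^3+2c_1^2c_2\ge 0$ (the $2\sqrt3\,m_1m_2^2$ term is not even needed for the contradiction), which still contradicts the stated hypothesis.
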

\begin{proof}
This is a direct computation similar to those made on the proof of Theorem (\ref{bs3BP}). 
\end{proof}

\begin{cor}
There are infinitely many periodic solutions considered in Theorem \ref{t4}
\end{cor}

\begin{thm}\label{t6}
For every solution of the 3-Body problem considered in Theorem \ref{t1}, we have that 
\begin{eqnarray}\label{odefg4BP}
a(r,g,g^\prime)\,  g^{\prime\prime}(r)+b(r,g)\, g^\prime+c(g,r)=0
\end{eqnarray}

where $k=\frac{m_1+3m_2}{3m_2} $and

\begin{eqnarray*}
a(r,g,g^\prime)&=&-\frac{18 l m_1 m_2^2 r^2+\left( 18 m_2^3r+6lc_2m_2r^2-9lc_1^2m_2^2\right) \sqrt{r^2+k^2 g(r)^2}}{l  r^2 \sqrt{r^2+k^2 g(r)^2} \left(9m_2^2+m_1(m_1+3m_2) g'(r)^2\right)} \\
b(r,g)&=&-\frac{c_1^2}{r^3}+\frac{m_2\sqrt{3}}{3 r^2}+\frac{3 (k-1) m_2 r}{\left(r^2+k^2 g(r)^2\right)^{3/2}} \\
c(r,g)&=& -\frac{3 k m_2 g(r)}{\left(r^2+k^2 g(r)^2\right)^{3/2}} 
\end{eqnarray*}

\end{thm}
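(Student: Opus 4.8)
The plan is to mirror the argument used for the 3-Body problem in the proof of Theorem \ref{t3}, now feeding in the equations of motion supplied by Theorem \ref{t4} together with the 4-Body energy relation. First I would set $f(t)=g(r(t))$ along a solution and differentiate twice, so that $\dot f = g'(r)\,\dot r$ and $\ddot f = g''(r)\,\dot r^2 + g'(r)\,\ddot r$. The identity $\frac{d^2}{dt^2}\bigl(f(t)-g(r(t))\bigr)=0$ then reads
\begin{equation*}
\ddot f - g''(r)\,\dot r^2 - g'(r)\,\ddot r = 0 .
\end{equation*}

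Next I would recognize the two force terms $c(r,g)$ and $b(r,g)$ inside this relation. With $k=\frac{m_1+3m_2}{3m_2}$ one has $m_1+3m_2 = 3m_2 k$, so the equation $\ddot f = -\frac{(m_1+3m_2)f}{h^3}$ of Theorem \ref{t4} becomes $\ddot f = -\frac{3m_2 k\,g(r)}{(r^2+k^2 g(r)^2)^{3/2}} = c(r,g)$, where $h=\sqrt{r^2+k^2 g(r)^2}$. Likewise, since $3(k-1)m_2 = m_1$ and $\frac{3m_2}{l^3}=\frac{\sqrt3}{3}m_2$ (recall $l=\sqrt3$), the radial equation $\ddot r = \frac{c_1^2}{r^3}-\frac{3m_2}{l^3 r^2}-\frac{m_1 r}{h^3}$ is exactly $\ddot r = -b(r,g)$. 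Substituting these two observations into the displayed identity collapses it to
\begin{equation*}
c(r,g) + b(r,g)\,g'(r) - \dot r^2\, g''(r) = 0 ,
\end{equation*}
which is the 4-Body analogue of Equation (\ref{tempe1}).

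The remaining task is to eliminate $\dot r^2$ in favour of $r$, $g$ and $g'$, for which I would use conservation of total energy. Collecting the three central--satellite distances (each equal to $h$) and the three mutual satellite distances (each equal to $lr=\sqrt3\,r$), together with $\dot y_3 = -\frac{m_1}{3m_2}\dot f$ for each satellite, the energy takes the form
\begin{equation*}
c_2 = \frac{3m_2 c_1^2}{2r^2} - \frac{3m_1 m_2}{h} - \frac{3m_2^2}{lr} + \frac{m_1(m_1+3m_2)}{6m_2}\,\dot f^2 + \tfrac32 m_2\,\dot r^2 .
\end{equation*}
Replacing $\dot f = g'(r)\,\dot r$ and solving the resulting linear equation for $\dot r^2$ gives
\begin{equation*}
\dot r^2 = \frac{6m_2\left(c_2 - \frac{3m_2 c_1^2}{2r^2} + \frac{3m_1 m_2}{h} + \frac{3m_2^2}{lr}\right)}{9m_2^2 + m_1(m_1+3m_2)\,g'(r)^2}.
\end{equation*}
Inserting $-\dot r^2$ as the coefficient of $g''$ then yields the stated equation with $a(r,g,g')=-\dot r^2$. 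I expect the only genuine work to be the bookkeeping that this last expression simplifies, after clearing denominators by $l\,r^2\sqrt{r^2+k^2g^2}$, to the quoted formula for $a(r,g,g')$; this is the step where a sign or a factor of $l$ could be mislaid, so I would verify it by expanding both numerators term by term. As in Theorem \ref{t3}, the converse direction of the claim (that the stated $a,b,c$ reproduce the motion) is then a direct computation, so the substantive content is precisely the forward substitution described above.
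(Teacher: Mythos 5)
Your proposal is correct and takes essentially the same route the paper intends: the paper proves the 3-body analogue (Theorem \ref{t3}) by exactly this substitution $f(t)=g(r(t))$, reduction of $\frac{d^2}{dt^2}\bigl(f-g(r)\bigr)=0$ to $c+b\,g'-\dot{r}^2 g''=0$, and elimination of $\dot{r}^2$ through the energy integral, leaving this 4-body case as the corresponding direct computation. I verified the bookkeeping you flagged: clearing denominators in your expression for $-\dot{r}^2$ by $l\,r^2\sqrt{r^2+k^2g^2}$ reproduces the stated $a(r,g,g')$ term by term (using $3(k-1)m_2=m_1$ and $3/l^3=\sqrt{3}/3$), so the argument is complete.
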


\section{The n-Body Problem}

The same type of results generalize for the n-Body problem. Let us see this.


\begin{thm}\label{t7}
The functions \begin{eqnarray*}
x(t)&=&\left(0,0,f(t)\right)\\
y^1(t)&=&\left(r(t)\cos(\theta(t))\, ,\, r(t)\sin(\theta(t))\, ,\, -\frac{m_1}{n m_2}\, f(t)\right)\\
y^2(t)&=&\left(r(t)\cos(\theta(t)+\frac{2 \pi}{n})\, ,\, r(t)\sin(\theta(t)+\frac{2 \pi}{n})\, ,\, -\frac{m_1}{ n m_2}\, f(t)\right)\\
  &\vdots& \\
y^n(t)&=&\left(r(t)\cos(\theta(t)+\frac{2(n-1) \pi}{n})\, ,\, r(t)\sin(\theta(t)+\frac{2 (n-1)\pi}{n})\, ,\, -\frac{m_1}{nm_2}\, f(t)\right)\\
\end{eqnarray*}

provides a solution of the $(n+1)-$body problem with $n>1$, if and only if

\begin{eqnarray}\label{e2}
\ddot{f}&=&-\frac{(m_1+nm_2)f}{h^3}\\
 \ddot{r} &=& \frac{c_1^2}{r^3}-a_n \frac{m_2}{r^2}-\frac{m_1r}{h^3} \quad\hbox{and}\quad r^2\dot{\theta}=c_1
\end{eqnarray}

where  $h=\sqrt{r^2+(\frac{m_1+n m_2}{n m_2})^2f^2}$,  $c_1=y_1(0)\, \dot{y_2}(0)-y_2(0)\, \dot{y_1}(0)$ and 

\begin{eqnarray}
a_n=\sum_{j=1}^{n-1} \frac{1-\hbox{e}^{\frac{2 \pi j i}{n}}}{|\hbox{e}^{\frac{2 \pi j i}{n}}-1|^3}
\end{eqnarray}

\end{thm}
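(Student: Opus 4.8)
The plan is to substitute the given ansatz directly into the $(n+1)$ Newtonian equations of motion and to show that the whole system holds if and only if the three scalar equations in (\ref{e2}) hold; since the statement is an equivalence and the ansatz is completely explicit, everything reduces to a verification of the same flavour as in Theorem \ref{t1} and Theorem \ref{t4}. First I would record the two automatic consequences of the ansatz. The center-of-mass condition holds identically: the $z$-components sum to $m_1 f+n\,m_2\,(-\tfrac{m_1}{nm_2}f)=0$, while the planar components sum to $m_2 r\sum_{j=0}^{n-1}(\cos(\theta+\tfrac{2\pi j}{n}),\sin(\theta+\tfrac{2\pi j}{n}))=0$, since these are the vertices of a regular $n$-gon. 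The decisive symmetry is that every orbiting body is equidistant from $x$: with $\phi_j=\theta+\frac{2\pi(j-1)}{n}$ one has $y^j-x=(r\cos\phi_j,\,r\sin\phi_j,\,-\tfrac{m_1+nm_2}{nm_2}f)$, so $|y^j-x|^2=r^2+(\tfrac{m_1+nm_2}{nm_2})^2 f^2=h^2$ independently of $j$.

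With this in hand I would verify the equation for the axial body $x$ first. Because $|y^j-x|=h$ for every $j$, the right-hand side of $\ddot x=\sum_j\frac{m_2}{|y^j-x|^3}(y^j-x)$ equals $\frac{m_2}{h^3}\sum_j(y^j-x)$; its planar part vanishes by the $n$-gon identity, and its $z$-part equals $\frac{m_2}{h^3}\cdot\big(-\tfrac{m_1+nm_2}{m_2}f\big)$. Matching this with $\ddot x=(0,0,\ddot f)$ yields exactly $\ddot f=-\frac{(m_1+nm_2)f}{h^3}$.

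Next I would treat the orbiting bodies, where the main reduction is rotational equivariance: the rotation $R$ by $2\pi/n$ about the $z$-axis fixes $x$ and cyclically permutes $y^1,\dots,y^n$, and the gravitational equations for the equal masses $m_2$ are equivariant under $R$, so it suffices to verify the single equation for $y^1$. Its $z$-component reproduces the $\ddot f$-equation (consistently), so the content is the planar equation
\begin{eqnarray*}
\ddot Y^1=\frac{m_1}{h^3}(-Y^1)+\sum_{j=2}^{n}\frac{m_2}{|Y^j-Y^1|^3}(Y^j-Y^1),\qquad Y^1=r(\cos\theta,\sin\theta).
\end{eqnarray*}
The main obstacle, and the only genuinely new computation beyond Theorems \ref{t1}--\ref{t4}, is the inter-body sum. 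Identifying $\R^2$ with $\mathbb{C}$ and writing $Y^j=r\,e^{i\phi_j}$, the relation $Y^j-Y^1=r\,e^{i\theta}(e^{2\pi i(j-1)/n}-1)$ gives
\begin{eqnarray*}
\sum_{j=2}^{n}\frac{Y^j-Y^1}{|Y^j-Y^1|^3}=\frac{e^{i\theta}}{r^2}\sum_{\ell=1}^{n-1}\frac{e^{2\pi i\ell/n}-1}{|e^{2\pi i\ell/n}-1|^3}=-\frac{a_n}{r^2}\,e^{i\theta}=-\frac{a_n}{r^3}\,Y^1,
\end{eqnarray*}
where the middle sum is exactly $-a_n$, and the pairing $\ell\leftrightarrow n-\ell$ shows that $a_n$ is real (the imaginary parts cancel), so the net force of the ring on $Y^1$ is central. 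This collapse of the regular-$n$-gon sum to a real multiple of $Y^1$ is the step I expect to require the most care.

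Finally, substituting back gives the central-force law $\ddot Y^1=-\big(\tfrac{m_1}{h^3}+\tfrac{a_n m_2}{r^3}\big)Y^1$. Writing $\ddot Y^1=(\ddot r-r\dot\theta^2)(\cos\theta,\sin\theta)+(2\dot r\dot\theta+r\ddot\theta)(-\sin\theta,\cos\theta)$ and comparing components, the transverse part gives $\frac{d}{dt}(r^2\dot\theta)=0$, hence $r^2\dot\theta=c_1$, while the radial part gives $\ddot r-\frac{c_1^2}{r^3}=-\frac{a_n m_2}{r^2}-\frac{m_1 r}{h^3}$, which is the stated $\ddot r$-equation. Conversely, if $f,r,\theta$ satisfy the three equations, reversing each of these computations shows that the ansatz solves all $(n+1)$ Newtonian equations, which completes the equivalence.
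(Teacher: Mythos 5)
Your proposal is correct and follows exactly the route the paper intends: the paper's proof of Theorem \ref{t7} is just ``a direct computation similar to Theorem \ref{t1},'' and you carry out precisely that computation, with the only genuinely new ingredients --- the equidistance $|y^j-x|=h$, the rotational-equivariance reduction to $y^1$, and the collapse of the regular-$n$-gon sum to $-\frac{a_n}{r^3}Y^1$ (with $a_n$ real by the pairing $\ell\leftrightarrow n-\ell$) --- handled correctly. In fact your write-up supplies more detail than the paper itself, and the final radial/transverse decomposition matching $\ddot r=\frac{c_1^2}{r^3}-\frac{a_n m_2}{r^2}-\frac{m_1 r}{h^3}$ and $r^2\dot\theta=c_1$ checks out against the stated values $a_2=\frac14$, $a_3=\frac{1}{\sqrt3}$.
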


\begin{proof}
This is a direct computation similar to those made on the proof of Theorem (\ref{t1}). 
\end{proof}

A direct computation shows that $a_2=\frac{1}{4},\,  a_3=\frac{1}{\sqrt{3}}$ and $ a_4=\frac{1}{4}+\frac{1}{\sqrt{2}}$

\begin{cor}
When the function $f$ is identically zero, the body with mass $m_1$ will not move and the other two bodies will move on a conic.  For the bounded solutions the bodies move along ellipses and are periodic. In this case we have {\it explicit} formulas for the solutions in the same way that we do for the two body problem. \end{cor}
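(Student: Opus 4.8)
The plan is to specialise the system of Theorem \ref{t7} to the hypothesis $f\equiv 0$ and to recognise the equations that survive as the classical planar Kepler problem. First I would record the immediate geometric consequences of $f\equiv 0$: one has $x(t)=(0,0,f(t))\equiv(0,0,0)$, so the body of mass $m_1$ rests permanently at the origin, which is the center of mass; and the third coordinate $-\frac{m_1}{nm_2}f$ of each $y^j$ vanishes, so the $n$ bodies of mass $m_2$ all move in the plane $\Pi$. Note also that $f\equiv 0$ is compatible with the equation $\ddot f=-\frac{(m_1+nm_2)f}{h^3}$, both sides being zero, so these configurations genuinely occur.

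Next, with $f\equiv 0$ one has $h=\sqrt{r^2+k^2f^2}=r$, and the radial equation of Theorem \ref{t7} collapses to
\begin{eqnarray*}
\ddot r&=&\frac{c_1^2}{r^3}-\frac{m_1+a_n m_2}{r^2},\qquad r^2\dot\theta=c_1.
\end{eqnarray*}
This is precisely the reduced radial--angular system of a single particle in a Newtonian central field with specific angular momentum $c_1$ and gravitational parameter $\mu=m_1+a_n m_2$; since $a_n>0$ the parameter $\mu$ is positive and the force is attractive. I would then carry out the standard Binet reduction: putting $u=1/r$ and using $\theta$ as the independent variable turns the pair above into $u''+u=\mu/c_1^2$, whose general solution $u=\frac{\mu}{c_1^2}\bigl(1+e\cos(\theta-\theta_0)\bigr)$ is the polar equation of a conic of eccentricity $e$. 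Hence each $y^j$ traces a conic, the $n$ bodies sitting at the vertices of a regular $n$-gon that rotates and pulsates, each vertex on a congruent conic rotated by $2\pi j/n$.

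For the bounded solutions I would invoke the classical Kepler dichotomy: boundedness (equivalently, negative energy) forces $e<1$, so the conic is an ellipse, and Keplerian motion on an ellipse closes up after one radial period, whence the entire configuration is periodic. The explicit formulas then follow verbatim from the two-body problem: the energy integral of $\ddot r=c_1^2/r^3-\mu/r^2$ is solved through the eccentric-anomaly parametrisation and Kepler's equation, $\theta(t)$ is recovered from $r^2\dot\theta=c_1$ by a single quadrature, and the results are inserted into the coordinate formulas of Theorem \ref{t7}. The only step requiring any real care is the reduction of the combined central and mutual attractions to one inverse-square central force, but that reduction is exactly what the constant $a_n$ of Theorem \ref{t7} encodes; granting it, the corollary reduces to classical Kepler theory.
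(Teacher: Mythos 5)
Your proposal is correct and takes essentially the same approach the paper intends: the paper states this corollary without proof, treating it as the immediate reduction of the system in Theorem \ref{t7} with $f\equiv 0$ (so $h=r$) to the classical Kepler problem, which is exactly the reduction you carry out. Your Binet computation with effective parameter $\mu=m_1+a_n m_2>0$, the conic $u=\frac{\mu}{c_1^2}\left(1+e\cos(\theta-\theta_0)\right)$, and the ellipse/periodicity dichotomy for bounded solutions simply supply the standard details the paper leaves implicit (and which its $19$-body example with $r=\frac{1}{1-\frac{1}{2}\cos\theta}$ illustrates).
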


\begin{exmp} As an example here we show an explicit example of the $19$-Body Problem. If we take $m_1=2$

\begin{eqnarray*}m_2&=&24\, \Big{(}4 \sqrt{3}+15+6 \sqrt{\frac{2}{1+\cos \left(\frac{\pi }{9}\right)}}+6 \sqrt{\frac{2}{1+\cos \left(\frac{2 \pi }{9}\right)}}+ 6 \csc \left(\frac{\pi }{9}\right)+\\
& &6 \csc \left(\frac{\pi }{18}\right)+6 \sqrt{\frac{2 \csc \left(\frac{\pi }{18}\right)}{\csc \left(\frac{\pi }{18}\right)-1}}+6 \csc \left(\frac{\pi }{18}\right) \sqrt{\frac{2 \sin \left(\frac{\pi }{18}\right)}{1+\csc \left(\frac{\pi }{18}\right)}}\quad \Big{)}^{-1} \approx 0.231508,\, 
\end{eqnarray*}

then, a solution of the 19-Body problem with masses: $m_1$ (described by $x(t)$) and $m_2$ for the other 18 bodies (described by $y^j(t)$, $j=1,\dots,18$) is given by

$$x(t)=(0,0,0)\, \hbox{and} \, y^j(t)=\left(r(t)\cos(\theta(t)+\frac{2(j-1) \pi}{18})\, ,\, r(t)\sin(\theta(t)+\frac{2 (j-1)\pi}{18}\, ,\, 0\right)$$

with $r(t)=\frac{1}{1-\frac{1}{2}\, \cos(\theta(t))}$ and $\theta(t)$ a solution of $r(t)^2\dot{\theta}(t)=-2$.

\end{exmp}

\begin{figure}[hbtp]
\begin{center}
\includegraphics[width=.4\textwidth]{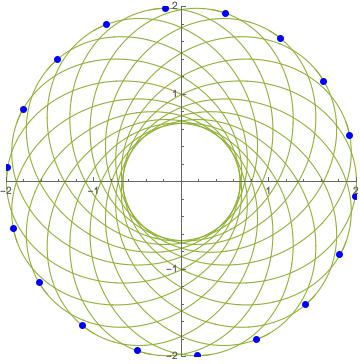}
\end{center}
\caption{Explicit solutions can be easily found for the solutions of the $n$-body that we are considering  when $f$ is identically zero, the ordinary differential equation in this case reduces to the ordinary differential equation  for the two body problem.  }
\end{figure}

\begin{thm}\label{t8}
If a solution of the (n+1)-Body problem considered in Theorem \ref{t7} has nonzero angular momentum, this is $c_1\ne0$, and it also has negative total energy, then, there is no collision. Moreover

\begin{eqnarray} \label{dDelta}
D=2 n c_1^2 c_2 m_2+n^2\, ( m_1m_2+\frac{b_n}{2} m_2^2)^2>0
\end{eqnarray}

and 

\begin{eqnarray} \label{rb}
\frac{-nm_1m_2-\frac{nb_n}{2} m_2^2+\sqrt{D}}{2c_2}<r(t)<\frac{-nm_1m_2-\frac{nb_n}{2} m_2^2-\sqrt{D}}{2c_2}
\end{eqnarray}

where

\begin{eqnarray} \label{dDelta}
b_n=\sum_{j=1}^{n-1} |\hbox{e}^{\frac{2 \pi j i}{n}}-1|^{-1}
\end{eqnarray}

\end{thm}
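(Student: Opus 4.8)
The plan is to mimic the proofs of Theorems \ref{t2} and \ref{t8}'s 3-body and 4-body predecessors, since Theorem \ref{t8} is the $(n+1)$-body analogue of Theorem \ref{t2}. First I would write out the total energy $c_2$ for the configuration in Theorem \ref{t7}. By the symmetry of the solution, all $n$ moving bodies sit at radius $r$ from the $z$-axis, equally spaced by angle $2\pi/n$, and share the same height $-\frac{m_1}{nm_2}f$; the central body sits at $(0,0,f)$. Thus each moving body is at distance $h=\sqrt{r^2+(\frac{m_1+nm_2}{nm_2})^2 f^2}$ from the central mass, contributing $n$ identical terms $-\frac{m_1 m_2}{h}$ to the potential. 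The mutual distances among the $n$ moving bodies are $r\,|e^{2\pi j i/n}-1|$ for the $j$-th neighbor, so the pairwise potential among them sums (using the symmetry that each of the $n$ bodies sees the same set of neighbors, counted once) to $-\frac{m_2^2}{2r}\sum_{j=1}^{n-1}|e^{2\pi j i/n}-1|^{-1}=-\frac{m_2^2 b_n}{2r}$, which is exactly where the constant $b_n$ in \eqref{dDelta} enters. The kinetic energy, using $\dot{Y}\cdot\dot{Y}=\dot r^2+c_1^2/r^2$ as in the proof of Theorem \ref{t2}, produces the $m_2 c_1^2/r^2$ term after dropping the manifestly nonnegative $\dot r^2$ and $\dot f^2$ pieces.

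Next I would extract the key inequality. Keeping only the three terms that do not depend on the (nonnegative) velocity squares, I get
\begin{eqnarray*}
\frac{n m_2 c_1^2}{r^2}-\frac{n m_1 m_2}{h}-\frac{n b_n m_2^2}{2r}<c_2.
\end{eqnarray*}
Since $h=\sqrt{r^2+k^2 f^2}\ge r$, replacing $h$ by $r$ only decreases $-\frac{n m_1 m_2}{h}$ in magnitude, i.e.\ increases the left side, so the inequality still holds with $h$ replaced by $r$ in the middle term. Multiplying through by $r^2>0$ and rearranging yields a quadratic inequality in $r$: I would show $r$ lies in the region where the downward-opening (since $c_2<0$) parabola
\begin{eqnarray*}
u\mapsto c_2 u^2+\Big(n m_1 m_2+\tfrac{n b_n}{2}m_2^2\Big)u-n m_2 c_1^2
\end{eqnarray*}
is positive. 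The discriminant of this parabola is precisely $D=2n c_1^2 c_2 m_2+n^2(m_1 m_2+\tfrac{b_n}{2}m_2^2)^2$ (after factoring the coefficient correctly), and positivity of $r$ together with $c_1\ne0$ forces $D>0$, giving \eqref{dDelta}; the two roots then give the bounds \eqref{rb}. The absence of collision follows exactly as before: collision requires $r\to 0$ (the moving bodies collide with each other or the axis) or $h\to 0$, and the strict lower bound $r>\frac{-nm_1m_2-\frac{nb_n}{2}m_2^2+\sqrt D}{2c_2}>0$ rules this out, while $h\ge r$ rules out $h\to 0$.

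The step I expect to require the most care is the bookkeeping of the potential energy among the $n$ moving bodies, namely confirming that the double sum over all unordered pairs collapses to the single-index sum $b_n=\sum_{j=1}^{n-1}|e^{2\pi ji/n}-1|^{-1}$ with the correct combinatorial factor. Because the configuration is a regular $n$-gon, the distance between bodies $p$ and $q$ depends only on $|p-q|\bmod n$, so the sum over the $\binom{n}{2}$ pairs equals $\frac{n}{2}\sum_{j=1}^{n-1}(r|e^{2\pi ji/n}-1|)^{-1}$; matching this against the claimed coefficient $\frac{nb_n}{2}m_2^2$ in \eqref{rb} is the one place where an off-by-a-factor-of-two or a double-counting error could creep in, so I would verify it against the known values $b_2=\tfrac12$, $b_3=\tfrac{2}{\sqrt3}$, $b_4=1+\tfrac{1}{\sqrt2}$ and check consistency with the $a_n$ appearing in Theorem \ref{t7}. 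Everything else is the same quadratic-inequality argument already carried out for $n=2$ and $n=3$, so I would state those steps as ``a direct computation similar to the proof of Theorem \ref{t2}.''
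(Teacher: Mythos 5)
Your route is exactly the one the paper intends: the paper's own proof of Theorem \ref{t8} is the one-sentence remark that it is ``a direct computation similar to the proof of Theorem \ref{t2},'' and your plan --- write the conserved energy for the symmetric configuration, drop the manifestly nonnegative $\dot r^2$ and $\dot f^2$ contributions, use $h\ge r$ to replace the central-body potential, and read the bounds for $r$ off a downward-opening parabola --- is precisely that computation. Your bookkeeping of the mutual potential is also right where it matters: the displayed key inequality carries the correct $\frac{n b_n m_2^2}{2r}$ (although the prose sentence preceding it drops a factor of $n$; the pairwise sum over the regular $n$-gon is $-\frac{n m_2^2 b_n}{2r}$, as you later note in the $\binom{n}{2}$-pairs discussion).

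However, there is a genuine factor-of-two error in your kinetic term, and it is not cosmetic: it makes your central inequality false and your discriminant disagree with the stated $D$. The $n$ moving bodies contribute transverse kinetic energy $n\cdot\frac{1}{2}m_2\,r^2\dot\theta^2=\frac{n m_2}{2}\,\frac{c_1^2}{r^2}$, so energy conservation only yields
\[
\frac{n m_2 c_1^2}{2 r^2}-\frac{n m_1 m_2}{h}-\frac{n b_n m_2^2}{2r}\;\le\; c_2 ,
\]
whereas you assert the same inequality with $\frac{n m_2 c_1^2}{r^2}$. Your stronger version amounts to discarding $\frac{n m_2}{2}\bigl(\dot r^2-\frac{c_1^2}{r^2}\bigr)$, which is not a sign-definite quantity, so it does not follow from conservation of energy. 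Concretely, your parabola $u\mapsto c_2u^2+\bigl(nm_1m_2+\frac{nb_n}{2}m_2^2\bigr)u-nm_2c_1^2$ has discriminant $n^2\bigl(m_1m_2+\frac{b_n}{2}m_2^2\bigr)^2+4n\,c_1^2c_2m_2$, not the theorem's $D=n^2\bigl(m_1m_2+\frac{b_n}{2}m_2^2\bigr)^2+2n\,c_1^2c_2m_2$; the correct constant term is $-\frac{n}{2}m_2c_1^2$, and your parenthetical claim that the discriminant ``is precisely $D$'' would fail the very consistency check you propose, since at $n=2$ your formula gives $32c_1^2c_2m_2+(4m_1m_2+m_2^2)^2$ after scaling by $4$, contradicting Theorem \ref{t2}'s $16c_1^2c_2m_2+(4m_1m_2+m_2^2)^2$. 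Two smaller slips: your check value for $b_4$ should be $\frac{1}{2}+\sqrt{2}$ (as the paper states), not $1+\frac{1}{\sqrt{2}}$; and your monotonicity sentence is backwards --- replacing $h$ by $r\le h$ \emph{increases} the magnitude of the attractive term, hence \emph{decreases} the left-hand side, which is the correct reason the inequality is preserved. With the kinetic coefficient fixed, the remainder of your argument (positivity of $r$ forcing $D>0$, the two roots giving the bounds, and $h\ge r>0$ excluding all collisions since every mutual distance is bounded below by a positive multiple of $r$) is correct and coincides with the paper's intended proof.
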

\begin{proof}
This is a direct computation similar to those made on the proof of Theorem (\ref{t2}). 
\end{proof}

A direct computation shows that $b_2=\frac{1}{2}$, $b_3=\frac{2}{\sqrt{3}}$, $b_4=\frac{1}{2}+\sqrt{2}$ and $b_5=2\sqrt{1+\frac{2}{\sqrt{5}}}  $

\begin{thm}\label{bsnBP}
The solutions of the $(n+1)-$Body Problem considered in Theorem \ref{t7} with $c_1\ne0$, $c_2<0$ and $2 c_1^2c_2+n a_nm_2^2(2m_1+(b_n-a_n)m_2) <0$ are bounded.
\end{thm}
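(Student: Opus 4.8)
The plan is to follow the architecture of the proof of Theorem~\ref{bs3BP} essentially verbatim, replacing the two scalar coefficients $\tfrac14$ and $\tfrac12$ of the three--body case by the sums $a_n$ and $b_n$. Since $c_1\neq 0$ and $c_2<0$, Theorem~\ref{t8} already confines $r(t)$ to a compact interval $[a_1,b_1]$ with $a_1>0$; as $r$ is bounded away from both $0$ and $\infty$ and $\theta$ is recovered from $r^2\dot\theta=c_1$, the whole configuration $x,y^1,\dots,y^n$ is bounded once $f$ is bounded. Thus the theorem reduces to proving that $f$ stays bounded, and I would argue this by contradiction, assuming $f$ unbounded.

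First I would record the conserved energy in the reduced variables $(r,f)$. With $k=\frac{m_1+nm_2}{nm_2}$ and $h=\sqrt{r^2+k^2f^2}$, summing the $n$ equal distances $|x-y^j|=h$ and the $\tfrac{n b_n}{2}$ mutual ring distances (which produce the factor $b_n$) gives the same kind of identity as in the three--body case,
\[
c_2=\frac{nm_2}{2}\dot r^2+\frac{nm_2c_1^2}{2r^2}+\frac{m_1(m_1+nm_2)}{2nm_2}\dot f^2-\frac{nm_1m_2}{h}-\frac{nb_nm_2^2}{2r},
\]
which is compatible with the discriminant $D$ and the $r$--bounds of Theorem~\ref{t8}. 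Two elementary facts feed the argument. The first is the identity $b_n=2a_n$: since $|e^{2\pi i j/n}-1|^2=2(1-\cos\tfrac{2\pi j}{n})$, the real part of each summand of $a_n$ equals $\tfrac12|e^{2\pi i j/n}-1|^{-1}$, whence $a_n=\tfrac12 b_n$. The second is that this energy identity forces $\dot f$ to be bounded, because every term on the right except the $\dot f^2$ term remains bounded while $r\in[a_1,b_1]$.

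Next I would extract the oscillation of $r$. Being bounded yet solving the second order equation of Theorem~\ref{t7}, $r$ cannot tend to a limit: as $h\to\infty$ the equation degenerates to the autonomous $\ddot r=\frac{c_1^2}{r^3}-\frac{a_nm_2}{r^2}$, whose only rest point $r_*=\frac{c_1^2}{a_nm_2}$ is a center, not an attractor, so a non--constant solution keeps crossing it and $\ddot r$ vanishes along a sequence $t_k\to\infty$. Assuming $f$ unbounded, the restoring sign $\ddot f=-\frac{(m_1+nm_2)f}{h^3}$ together with the boundedness of $\dot f$ forces $f(t)\to+\infty$ or $f(t)\to-\infty$; hence $f(t_k)\to\pm\infty$ and $h(t_k)\to\infty$. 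Inserting $h(t_k)\to\infty$ into $\ddot r(t_k)=0$ kills $\frac{m_1r}{h^3}$ and leaves $\frac{c_1^2}{r(t_k)^3}\to\frac{a_nm_2}{r(t_k)^2}$, so $r(t_k)\to r_*$.

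Finally I would evaluate the energy along $t_k$: dropping the two nonnegative kinetic terms and bounding $\frac{nm_1m_2}{h}\le\frac{nm_1m_2}{r}$ via $h\ge r$ gives
\[
c_2\ \ge\ \frac{nm_2c_1^2}{2r(t_k)^2}-\frac{nm_1m_2}{r(t_k)}-\frac{nb_nm_2^2}{2r(t_k)},
\]
and letting $r(t_k)\to r_*=\frac{c_1^2}{a_nm_2}$ the right side collapses, using $b_n=2a_n$, to $-\frac{na_nm_2^2(2m_1+(b_n-a_n)m_2)}{2c_1^2}$; multiplying by $2c_1^2>0$ yields $2c_1^2c_2+na_nm_2^2(2m_1+(b_n-a_n)m_2)\ge 0$, contradicting the hypothesis and proving $f$ bounded. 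I expect the one genuinely non--computational step to be the dichotomy ``$f$ unbounded $\Rightarrow f\to\pm\infty$'' (ruling out unbounded oscillation of $f$); the restoring sign of $\ddot f$ and the bound $|\ddot f|\le\frac{m_1+nm_2}{k^3f^2}$ for large $|f|$ make this plausible but must be argued carefully. I would also remark that the coupling bound $\frac{nm_1m_2}{h}\le\frac{nm_1m_2}{r}$ is deliberately lossy, which is precisely why the resulting sufficient condition carries the extra $m_1$ term and is not sharp.
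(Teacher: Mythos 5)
Your proposal is correct and follows essentially the same route as the paper: the paper proves Theorem \ref{bs3BP} by exactly your contradiction scheme ($r$ bounded with $\ddot r$ vanishing along a sequence, the dichotomy that unbounded $f$ must converge to $+\infty$ or $-\infty$, and the incompatibility of $\ddot r=0$, $c_2<0$ and $f$ large with the stated inequality), and then disposes of Theorem \ref{bsnBP} as ``the same computation'' with $\tfrac14,\tfrac12$ replaced by $a_n,b_n$. Your write-up actually supplies details the paper leaves implicit --- the reduced energy identity, the identity $b_n=2a_n$, the limit $r(t_k)\to c_1^2/(a_n m_2)$, and the deliberately lossy bound $h\ge r$ that produces the paper's $2m_1$ term --- so it is a faithful, more careful execution of the paper's own argument.
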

\begin{proof}
This is a direct computation similar to that made on the proof of Theorem (\ref{bs3BP}). 
\end{proof}

\begin{cor}
There are infinitely many periodic solutions considered in Theorem \ref{t4}
\end{cor}

\begin{thm}\label{t6}
For every solution of the 3-Body problem considered in Theorem \ref{t1}, we have that 
\begin{eqnarray}\label{odefg4BP}
a(r,g,g^\prime)\,  g^{\prime\prime}(r)+b(r,g)\, g^\prime+c(g,r)=0
\end{eqnarray}

where $k=\frac{m_1+nm_2}{nm_2} $ and

\begin{eqnarray*}
a(r,g,g^\prime)&=&-\frac{2 n^2 m_1 m_2^2 r^2+\left( n^2b_nm_2^3r + 2nc_2m_2r^2-n^2c_1^2m_2^2\right) \sqrt{r^2+k^2 g(r)^2}}{ r^2 \sqrt{r^2+k^2 g(r)^2} \left(n^2 m_2^2+m_1(m_1+nm_2) g'(r)^2\right)}  \\
b(r,g)&=&-\frac{c_1^2}{r^3}+\frac{m_2a_n}{ r^2}+\frac{ n (k-1) m_2 r}{\left(r^2+k^2 g(r)^2\right)^{3/2}} \\
c(r,g)&=& -\frac{n k m_2 g(r)}{\left(r^2+k^2 g(r)^2\right)^{3/2}} 
\end{eqnarray*}

\end{thm}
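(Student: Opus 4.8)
The plan is to follow verbatim the strategy of the proof of Theorem~\ref{t3}, only keeping track of how the combinatorial constants $a_n$ and $b_n$ enter. Fix a solution as in Theorem~\ref{t7} and restrict attention to an interval on which $\dot r(t)\neq 0$; there $r$ is a local diffeomorphism of time, so we may write $f(t)=g(r(t))$ for a twice-differentiable $g$. Differentiating this identity twice gives $\dot f=g'(r)\dot r$ and $\ddot f=g''(r)\dot r^2+g'(r)\ddot r$, so that $\frac{d^2}{dt^2}\bigl(f-g(r)\bigr)=0$ becomes $\ddot f-g'\ddot r-g''\dot r^2=0$.

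First I would substitute the two second-order equations of Theorem~\ref{t7}, namely $\ddot f=-(m_1+nm_2)f/h^3$ and $\ddot r=c_1^2/r^3-a_nm_2/r^2-m_1r/h^3$, into the relation above. Writing $h=\sqrt{r^2+k^2g^2}$ and using the two identities $m_1+nm_2=nm_2k$ and $m_1=nm_2(k-1)$ coming from $k=\frac{m_1+nm_2}{nm_2}$, the $f$-term collapses to $c(r,g)=-nkm_2g/(r^2+k^2g^2)^{3/2}$ and the coefficient of $g'$ becomes exactly $b(r,g)=-c_1^2/r^3+m_2a_n/r^2+n(k-1)m_2r/(r^2+k^2g^2)^{3/2}$. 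At this point the identity reads $c(r,g)+b(r,g)g'-\dot r^2\,g''=0$, which already has the shape claimed in the theorem provided I can show that $-\dot r^2$, re-expressed as a function of $(r,g,g')$, equals the stated $a(r,g,g')$.

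The remaining, and principal, task is to produce the energy integral for the $(n+1)$-body configuration and use it to eliminate $\dot r^2$. Here I would compute the total energy of the symmetric configuration directly: the $n$ equal distances $|x-y^j|=h$ give a potential $-nm_1m_2/h$; the mutual horizontal distances among the $y^j$ are $r\,|e^{2\pi i j/n}-1|$, and summing $-m_2^2/\text{dist}$ over unordered pairs yields the attractive term $-\tfrac{n b_n}{2}m_2^2/r$ with $b_n=\sum_{j=1}^{n-1}|e^{2\pi i j/n}-1|^{-1}$; and the kinetic energy works out to $\tfrac{nm_2}{2}\dot r^2+\tfrac{nm_2}{2}\,c_1^2/r^2+\tfrac{nm_2(k-1)k}{2}\dot f^2$, the last coefficient arising from the common vertical speed $\tfrac{m_1}{nm_2}\dot f$ of the $y^j$ together with $\dot x=\dot f$. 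Setting this equal to the constant $c_2$ and replacing $\dot f^2$ by $g'^2\dot r^2$, the $\dot r^2$ terms combine into the factor $1+(k-1)k\,g'^2$, which I would then solve for $\dot r^2$.

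I expect the only real friction to be bookkeeping. The final step is to substitute $a(r,g,g')=-\dot r^2$ and check that, after clearing denominators and using $n^2m_2^2\bigl(1+(k-1)k\,g'^2\bigr)=n^2m_2^2+m_1(m_1+nm_2)g'^2$, the resulting rational expression matches the stated $a$ term for term; this is a direct but lengthy algebraic simplification. As in Theorem~\ref{t3}, the converse statement (that any $g$ solving (\ref{odefg4BP}) together with the energy relation reconstructs a genuine solution of Theorem~\ref{t7}) is then a reversal of these computations.
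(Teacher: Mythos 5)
Your proposal is correct and follows exactly the route the paper intends: the paper proves the $3$-body analogue (Theorem \ref{t3}) by writing $0=\frac{d^2}{dt^2}\bigl(f(t)-g(r(t))\bigr)$, substituting the second-order equations for $\ddot f$ and $\ddot r$ to get $c(r,g)+b(r,g)g'-\dot r^2 g''=0$, and then eliminating $\dot r^2$ via the energy integral with $\dot f=g'\dot r$, and the $n$-body case is left as the same computation, which you carry out. Your bookkeeping checks out, including the potential terms $-nm_1m_2/h$ and $-\tfrac{nb_n}{2}m_2^2/r$, the kinetic coefficient $\tfrac{nm_2}{2}k(k-1)\dot f^2$, and the identity $n^2m_2^2\bigl(1+(k-1)k\,g'^2\bigr)=n^2m_2^2+m_1(m_1+nm_2)g'^2$, which together reproduce the stated $a(r,g,g')=-\dot r^2$ term for term (you also correctly read the theorem's ``3-Body problem considered in Theorem \ref{t1}'' as a typo for the $(n+1)$-body setting of Theorem \ref{t7}).
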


\section{Examples of periodic solutions}

For the $(n+1)-$Boby Problem, the bounded solutions are among those that satisfy $c_1\ne0$, $c_2<0$ and the additional condition $2 c_1^2c_2+n a_nm_2^2(2m_1+(b_n-a_n)m_2) <0$. We also have that the solution is periodic if the function 

\begin{eqnarray*}\label{xi}
\xi(y_{10},dy_{20},df_0,m_1,m_2,t_0)= (f^\prime(t_0)-df_0)^2+(\theta(t_0)-\theta_0)^2+r^\prime(t_0)^2+f(t_0)^2 
\end{eqnarray*}

vanishes - please refer to Definition \ref{xi} for the particular case of the 3-Body Problem- The examples provided here satisfy that the function $\xi(q_0)<10^{-11}$, and since we are considering masses and initial condition of the order of 1, 10 and 100, this solution have the property that the initial conditions agrees with the values of the solution at $t=t_0$ with 5, 6 or 7 significant digits.

\subsection{Examples in the Three-body Problem}

These initial conditions make the function $\xi$ smaller than $10^{-11}$
\begin{enumerate}
\item $y_{10}=11.3361$, $dy_{20}=2.20041$, $df_0=1.5009$, $t_0=18.5318$, $m_1=41.0495$ and $m_2=81.3134$. For this example the angle is $\frac{7\pi}{6}$
\item $y_{10}=17.6054$, $dy_{20}=1.24777$, $df_0=2.20234$, $t_0=7.01051$, $m_1=459.197$ and $m_2=663.716$. For this example the angle is $\frac{23\pi}{12}$
\item $y_{10}=9.63668$, $dy_{20}=3.27637$, $df_0=5.48683$, $t_0=12.3248$, $m_1=107.622$ and $m_2=236.949$. For this example the angle is $\frac{3\pi}{2}$
\item $y_{10}=10.4734$, $dy_{20}=3.42417$, $df_0=3.92526$, $t_0=11.4716$, $m_1=100.583$ and $m_2=227.047$. For this example the angle is $\frac{4\pi}{3}$
\item $y_{10}=9.01442$, $dy_{20}=3.22148$, $df_0=6.13575$, $t_0=12.3114$, $m_1=112.041$ and $m_2=237.422$. For this example the angle is $\frac{5\pi}{3}$
\item $y_{10}=9.01442$, $dy_{20}=3.22148$, $df_0=6.13575$, $t_0=12.3114$, $m_1=112.041$ and $m_2=237.422$. For this example the angle is $\frac{5\pi}{3}$
\item $y_{10}=9.41501$, $dy_{20}=3.1106$, $df_0=6.20044$, $t_0=13.5238$, $m_1=117.811$ and $m_2=240.987$. For this example the angle is $\frac{7\pi}{4}$
\item $y_{10}=9.86766$, $dy_{20}=3.29664$, $df_0=5.10437$, $t_0=12.1545$, $m_1=102.817$ and $m_2=233.913$. For this example the angle is $\frac{17\pi}{12}$
\item $y_{10}=9.65614$, $dy_{20}=3.19928$, $df_0=5.7356$, $t_0=12.987$, $m_1=110.723$ and $m_2=238.196$. For this example the angle is $\frac{19\pi}{12}$

\item $y_{10}=10.7714$, $dy_{20}=3.5243$, $df_0=3.1821$, $t_0=10.9141$, $m_1=98.7543$ and $m_2=230.66$. For this example the angle is $\frac{7\pi}{6}$
\item $y_{10}=11.9793$, $dy_{20}=3.4739$, $df_0=2.02892$, $t_0=11.629$, $m_1=94.6866$ and $m_2=237.265$. For this example the angle is $\frac{13\pi}{12}$

\item $y_{10}=11.3215$, $dy_{20}=1.82051$, $df_0=3.75072$, $t_0=30.5561$, $m_1=242.104$ and $m_2=92.3726$. For this example the angle is $\frac{21\pi}{4}$
\end{enumerate}

\subsection{Examples in the Four-body Problem}

These initial conditions make the function $\xi$ smaller than $10^{-11}$
\begin{enumerate}

\item $y_{10}=11.3215$, $dy_{20}=1.82051$, $df_0=3.7507$, $t_0=30.5561$, $m_1=242.104$ and $m_2=92.3721$. For this example the angle is $\frac{21\pi}{4}$
\item $y_{10}=11.4248$, $dy_{20}=1.618332$, $df_0=3.7506$, $t_0=30.64$, $m_1=238.692$ and $m_2=92.357$. For this example the angle is $\frac{11\pi}{2}$

\item $y_{10}=11.7799$, $dy_{20}=0.9102$, $df_0=3.64$, $t_0=30.4953$, $m_1=216.145$ and $m_2=90.1003$. For this example the angle is $\frac{13\pi}{2}$

\item $y_{10}=11.5328$, $dy_{20}=1.43002$, $df_0=3.7398$, $t_0=30.7416$, $m_1=233.381$ and $m_2=92.1394$. For this example the angle is $\frac{23\pi}{4}$

\item $y_{10}=11.6333$, $dy_{20}=1.26332$, $df_0=3.7399$, $t_0=30.9651$, $m_1=244.691$ and $m_2=92.0296$. For this example the angle is $6 \pi$


\end{enumerate}

\end{document}